\newcommand{\Compressor}{\textsc{c}}
\newcommand{\ELECTROLYSERFUNCTION}{\Phi^{\Electrolyser}}
\newcommand{\Electricity}{E}
\newcommand{\Electrolyser}{\textsc{e}}
\newcommand{\Grid}{\textsc{\tiny G}}
\newcommand{\HOUR}{\mathbb{H}}
\newcommand{\Hydrogen}{H}
\newcommand{\InDemand}{\rightarrow\textsc{\tiny \Demand}}
\newcommand{\PPA}{\textsc{\tiny PPA}}
\newcommand{\PV}{\textsc{\tiny PV}}
\newcommand{\hour}{h}
\newcommand{\MaximalProductionHour}{\overline{m}^\Electrolyser}
\newcommand{\load}{\ell^{\Electrolyser}}
\newcommand{\Mode}{M^{\Electrolyser}}
\newcommand{\vaload}{\boldsymbol{\ell}^{\Electrolyser}}
\newcommand{\vaMode}{\va{M}^{\Electrolyser}}
\newcommand{\TurnElectrolyser}{M^{\Electrolyser^\curvearrowleft}}
\newcommand{\vaTurnElectrolyser}{\va{M}^{\Electrolyser^{\curvearrowleft}}}
\newcommand{\IDLE}{\textsc{idle}}
\newcommand{\START}{\textsc{start}}
\newcommand{\COLD}{\textsc{cold}}
\newcommand{\MODES}{\na{\COLD,\IDLE,\START}}
\newcommand{\SHORTMODES}{{\mathbb{M}}}
\newcommand{\OP}{\textsc{\tiny O}}
\newcommand{\EL}{\textsc{\tiny E}}
\newcommand{\MaximalProduction}{\mu}
\newcommand{\MinLoad}{\underline{\ell}^{\Electrolyser}}
\newcommand{\LoadSetFunction}{\mathcal{L}}
\newcommand{\HydrogenProduced}{\Hydrogen^{\Electrolyser}}
\newcommand{\vaHydrogenProduced}{\va{\Hydrogen}^{\Electrolyser}}
\newcommand{\PPAStock}{P}
\newcommand{\ElectricityCumul}{Q}
\def\keywords#1{\par\addvspace\medskipamount{\rightskip=0pt plus1cm
\def\and{\ifhmode\unskip\nobreak\fi\ $\cdot$
}\noindent\keywordname\enspace\ignorespaces#1\par}}
\def\keywordname{{\bfseries Keywords}}%
\title{Multistage stochastic optimization \\ of a mono-site hydrogen infrastructure \\ by decomposition techniques}
\author{Raian Lefgoum\thanks{Cermics, École des Ponts ParisTech, 6 et 8 avenue Blaise Pascal, 77455 Marne la Vallée Cedex 2}
  \and
  Sezin Afsar\thanks{Universidad de Oviedo, Gijón, Principality of Asturias, Spain}
  \and Pierre Carpentier\thanks{UMA, ENSTA Paris, IP Paris, France}
  \and Jean-Philippe Chancelier$^{*}$
  \and Michel De Lara$^{*}$}
\begin{document}
\maketitle

\begin{abstract}
  The development of hydrogen infrastructures requires to reduce their costs.
  In this paper, we develop a multistage stochastic optimization model for the
  management of a hydrogen infrastructure which consists of an electrolyser, a
  compressor and a storage to serve a transportation demand. This infrastructure
  is powered by three different sources: on-site photovoltaic panels (PV),
  renewable energy through a power purchase agreement (PPA) and the power grid. We
  consider uncertainties affecting on-site photovoltaic production and hydrogen
  demand.  Renewable energy sources are emphasized in the hydrogen production
  process to ensure eligibility for a subsidy, which is awarded if the proportion
  of nonrenewable electricity usage stays under a predetermined threshold. We
  solve the multistage stochastic optimization problem using a decomposition
  method based on Lagrange duality. The numerical results indicate that the
  solution to this problem, formulated as a policy, achieves a small duality gap,
  thus proving the effectiveness of this approach.
\end{abstract}

% post abstract
\keywords{Hydrogen infrastructure \and Stochastic optimization \and Lagrange decomposition}
%\subclass{49J53 \and  49K99 \and more}

%\newpage \tableofcontents

\section{Introduction}\label{Schiever_introduction}
Hydrogen, a versatile energy carrier, is predominantly produced using fossil
fuels, for example through steam methane reforming (SMR) of natural gas, a
process that releases significant carbon dioxide (CO2) emissions. As concerns
over climate change intensify, there is a growing imperative to shift towards
cleaner methods of hydrogen production. Water electrolysis, which generates
hydrogen by splitting water molecules using electricity, offers a promising
solution. However, the environmental benefits of hydrogen are contingent upon
the use of renewable electricity sources.

In addition to environmental considerations, the economic viability of the
hydrogen produced by water electrolysis hinges significantly on electricity
costs. The cost of electricity represents a significant proportion of the total
operational expenses~\cite{ELECCOST}, which presents a significant challenge to
the widespread adoption of this technology. Thus, optimizing the cost of
electricity becomes essential to ensure the competitiveness of hydrogen.

Hydrogen production has several characteristic features. Firstly, the
electrolyser, which is the equipment used to produce hydrogen, has a nonlinear
electrical consumption given the quantity of hydrogen produced. Typically, the
pressure of hydrogen produced by the electrolyser is quite low and since it is
the lightest element, a small quantity of it fills up a large space. Therefore,
it is customarily compressed and stored at a higher pressure. Furthermore,
hydrogen production, which requires electricity, is achieved through an energy
mix primarily composed of renewable energy sources. This has led to the recent
emergence of various contracts, known as Power Purchase Agreements (PPA), to
provide consumers with greater flexibility in producing green
hydrogen.
However, the optimization of hydrogen production faces significant
barriers. Renewable energy sources, such as solar or wind power, while abundant,
are characterized by inherent variability and intermittency. Moreover, the
uncertainty surrounding hydrogen demand adds complexity to production
optimization. Aligning this demand variability with renewable energy
intermittency poses a significant challenge in achieving efficient and
sustainable hydrogen production.

In the literature related to hydrogen management, a large number of studies
incorporates uncertainties in optimization problems by relying on stochastic
programming. More precisely,
in~\cite{TWOSTAGES1,TWOSTAGES2,TWOSTAGES3,TWOSTAGES4,TWOSTAGESELEC,TWOSTAGESARC}, two-stage stochastic
models are proposed where the first stage decision is a design decision
concerning the supply chain (equipment sizes, capacities, contracts, etc.) and the second stage is a management decision.

All the previous described works are modeled as two-stage
stochastic programming problems, and most of them are numerically solved through the use of mixed integer linear
programming solvers (MILP). However, it is worth noting that solving MILP models
becomes computationally intractable as the number of scenarios increases. As a
result, the operations aspect of the problems are simplified, leading to a
representation that may not fully capture the complexity of real-world problems. 

Few studies, like \cite{SCENARIOREDUC,SDDiP} rely on multistage stochastic optimization models taking into
account nonanticipativity constraints. In~\cite{SCENARIOREDUC}, a fast backward
scenario reduction algorithm \cite{FastReduc} is used to derive twenty representative
scenarios and then the optimization problem is solved as a MILP. In~\cite{SDDiP}, the hydrogen is
produced using renewable energies and is later converted using a fuel cell to
satisfy electricity demand. The optimization problem, formulated as a
multistage stochastic optimization problem, is solved using Stochastic Dual Dynamic Integer Programming (SDDiP)~\cite{SDDiPAlgorithm}.

To the best of our knowledge, \cite{SIMILAR} appears to be the most closely
aligned with our work. In this study, the authors employ dynamic programming to
address a multistage stochastic problem. Their work revolves around hydrogen
production utilizing both wind power and grid electricity, with subsequent
conversion back into electricity. This process serves the dual purpose of
meeting Power Purchase Agreement obligations and potentially generating revenue
through grid sales.

In this paper, we formulate and solve a multistage stochastic optimization
problem to manage a hydrogen infrastructure. Our contributions are
the following.
\begin{enumerate}
\item Compared with two-stage stochastic programming models, the proposed model
  adequately considers the sequential decisions (every hour) with the gradual
  revealing of the uncertainty over time.
\item The nonlinear electricity consumption of the electrolyser and its
  functioning modes are taken into consideration.
\item An electricity mix of on-site photovoltaic, renewable electricity through
  PPA and power grid is used to supply the
  infrastructure. Renewable energy sources are prioritized in the hydrogen
  production process to ensure eligibility for a subsidy.
\item Leveraging Lagrange duality, we decompose the original problem into two separate problems. The first one, that we call \emph{the operational problem}, involves the management of the hydrogen equipment and the demand satisfaction. The second one, that we call \emph{the electricity allocation problem}, is related to the allocation of the electricity sources. 
\end{enumerate}

The paper is organized as follows. In Sect.~\ref{Schiever:characteristics}, we
describe the studied system. In
Sect.~\ref{Schiever_problem_formulation_and_resolution}, we give the problem
formulation and propose a method based on Lagrange duality to solve the
problem. In Sect.~\ref{Schiever:Numerical_results}, we give the numerical
results. Finally, we provide the conclusion of this work in
Sect.~\ref{Schiever_conclusion}. We relegate proofs and technical points in
Appendix.

\section{Hydrogen infrastructure management problem}\label{Schiever:characteristics}
The following work is motivated by a real-life optimization problem proposed by a company. More precisely, we want to take into account in the modeling process that first, the company owns a fleet of diesel trucks and aims to decarbonize it, and second, they have several buildings with large roofs suitable for photovoltaic installations and has been awarded an investment subsidy to help develop green hydrogen infrastructure.
In~\S\ref{Schiever:characteristics1}, we describe the characteristics of the hydrogen infrastructure under study and, in~\S\ref{Schiever:mathematical_modeling}, we give its mathematical description.

\subsection{Hydrogen infrastructure case study}\label{Schiever:characteristics1}
We consider the hydrogen infrastructure described in
Figure~\ref{fig:diagram_Schiever} that is specific to the case study and 
which is composed of an electrolyser, a compressor that compresses the hydrogen
to an adequate pressure and a storage that stores the compressed hydrogen in the
same site. This hydrogen infrastructure is powered by solar panels, PPA and the
grid to satisfy an uncertain hydrogen demand.
\begin{figure}[htpp]
  \centering
  \mbox{\includegraphics[width=0.60\textwidth]{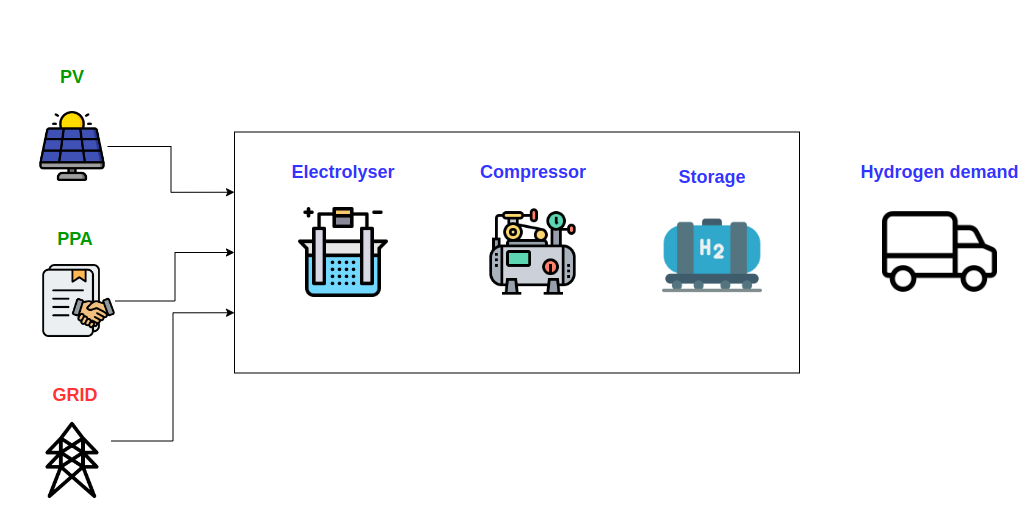}}
  \caption{Diagram of the hydrogen infrastructure (using free icons from \url{Flaticon.com})}
  \label{fig:diagram_Schiever}
\end{figure}

The optimization is done during one week (168
hours), by making decision every hour. Thus, the time horizon is $\horizon=168$,
the set of hours without the time horizon is $ \HOUR = \{0,1,..,\horizon-1\}$, the set of hours with the time horizon is $\overline{\HOUR} = \HOUR \cup \{\horizon\}$ and a generic hour is
denoted by $\hour \in \HOUR$.

In what follows, we describe the main components of the studied infrastructure: the electricity sources, the electrolyser, the volumetric compressor, the gaseous storage and the hydrogen demand.

\subsubsection{Electricity sources}
The hydrogen infrastructure is powered by the three following electricity sources.
\begin{itemize}
\item Photovoltaic (PV): energy produced from solar radiation through
  photovoltaic solar panels or power plants. The PV source is costless and its
  production is uncertain.
\item Power Purchase Agreement (PPA): is a long-term contract between an energy producer and a purchaser. This agreement outlines the terms under which the producer will sell electricity to the purchaser. PPAs are widely used in the renewable energy sector to secure financing for projects by providing a stable revenue stream. They mitigate risks for both producers and consumers, ensuring a reliable market for the energy produced. In this case study, we focus on PPA \emph{pay as consumed}, where the purchaser pays a price $c^s$ per kWh of electricity consumed, rather than a pre-agreed volume, and where, a maximal cumulated quantity of consumed electricity $\overline{\Electricity^{\PPA}}$ is fixed over a given timespan.
\item Grid: electricity available through purchase from the
  electricity network. In this study, the price of grid electricity is deterministic. Unlike the other two sources, the grid supplies electricity generated from nonrenewable sources.
\end{itemize}

\subsubsection{Electrolyser}
An electrolyser is a system made up of “stacks” (of cells) and of a BOP
(Balance of plant: rectifier/purifiers for water and gases, etc.).
Cell stacks convert chemical energy into electricity, and vice versa, by means of electrochemical reactions involving an anode and a cathode. The electrochemical
reaction that produces hydrogen (H$_2$) and oxygen (O$_2$) from water (H$_2$O)
occurs in each of the cells of the stacks.
The set of possible modes of an electrolyser is denoted by
\begin{equation}
  {\SHORTMODES} = \MODES \eqfinv
  \label{def:modes}
\end{equation}
where the three possible modes are defined as follows.
%(xxxxxxxxutiliser les même fonts qu'ailleur %(droite petite) (définir aussi l'ensemble des modes)
\begin{enumerate}  
\item \COLD:  in this mode, the
  electrolyser is off and does not consume electricity.
\item \IDLE:  in this mode, the
  electrolyser is on and consumes a constant amount of
  electricity, but does not produce hydrogen. 
  The advantage of being in idle mode is that the electrolyser can quickly switch to the \START~mode.
\item \START: in this mode, the electrolyser is on and is able to produce hydrogen.
\end{enumerate}

All the transitions between two modes are admissible. However, a transition
from mode $M \in \SHORTMODES$ to mode $M' \in \SHORTMODES$ requires a certain amount of time which is
assumed to be less than the timestep we consider in the modeling (1 hour). For
describing this amount of time, we introduce a function
$\mu: \SHORTMODES\times \SHORTMODES \to [0,1] $ which is such that
$1-\mu(M,M')$ quantifies the proportion of the current timestep
occupied by the transition from $M$ to $M'$. An example of a function $\mu$ for a specific electrolyser is given in Table~\ref{table:Schiever_transition_time_electrolyser} in~\S\ref{shriver-data}.

The quantity $\MaximalProductionHour$~(kg) (used in Equation~\eqref{eq:hydrogen_production}) is the maximal quantity of hydrogen that the electrolyser can produce during one timestep (hour). The function $\ELECTROLYSERFUNCTION$ (used in Equation~\eqref{eq:E_max} and~\eqref{eq:electrolyser_electricity}) gives the \emph{unitary electricity consumption} of the electrolyser, which is the electricity consumption per kilogram of hydrogen produced as a function of the load $\load$ (quantity of hydrogen produced as a percentage of the maximal hydrogen production). An example of a function $\ELECTROLYSERFUNCTION$ for a specific electrolyser is given in Figure~\ref{fig:schiever_load} in~\S\ref{shriver-data}.

\subsubsection{Volumetric compressor}
A hydrogen volumetric compressor is a device used to increase the pressure of hydrogen gas by reducing its volume through mechanical means, such as piston or diaphragm compression. This process enables the efficient storage and transportation of hydrogen in high-pressure tanks. The quantity $e^\Compressor$ (used in Equation~\eqref{eq:compressor_electricity}) is the unitary electricity consumption of the compressor per kg of hydrogen produced.

\subsubsection{Gaseous storage}
Gaseous storage of hydrogen refers to the containment of hydrogen gas under high
pressure in specialized tanks or cylinders for use in various applications such
as fuel cells, industrial processes, and hydrogen-powered vehicles. Each storage
is characterized by its volume, its temperature, and the maximal/minimal possible quantity of hydrogen that can be stored, denoted respectively by $\overline{\Stock}$ and $\underline{\Stock}$ (used Equation~\eqref{eq:stock_bounded}).

\subsubsection{Hydrogen demand $\Demand_\hour$}
The hydrogen produced is used to satisfy an uncertain hydrogen demand $\Demand_\hour$ (used in Equation~\eqref{eq:Shriver_deterministic_InstantaneousCost1} and~\eqref{eq:stock_dynamics}) in kg at each timestep $\hour$ (hour).
%DIRE QUE LUNITE CEST KG. rajouter plus de choses

\subsection{Hydrogen infrastructure management modeling}\label{Schiever:mathematical_modeling}

As shown in Figure~\ref{fig:diagram_Schiever}, the operations of this hydrogen infrastructure consist of determining the electricity mix between the uncertain PV, PPA and grid, the mode of the electrolyser and the quantity of hydrogen to produce to satisfy the uncertain hydrogen demand at every hour.

In what follows, we mathematically formulate the hydrogen infrastructure management problem. For this purpose, we introduce decision, state and uncertainty variables, as well as cost functions and constraints.
\subsubsection{Decision variables}
\label{Schiever:decision_variables}

For every timestep (hour) $\hour \in \HOUR$, the decision variables of the problem are described in
Table~\ref{table:Schiever_decisions}.

\begin{table}[ht]
  \begin{tabular}{|p{1.5cm}|p{7.0cm}|c|} \hline
    \centering{Decision}  & Description & Domain\\ \hline\hline
    $\Electricity^{\PPA}_{\hour}$ & Electricity from PPA (kWh)
                                        &$\mathbb{R_+}$\\ \hline 
    $\Electricity^{\Grid}_{\hour+1}$ & Electricity from the grid (kWh)
                                        &$\mathbb{R}$\\ \hline
    $\load_\hour$& Load at which the electrolyser is functioning&$[\MinLoad,1]$\\ \hline
    $\TurnElectrolyser_\hour$& Turn the electrolyser to \COLD {,} \IDLE\ or \START\ mode
                                        & $\SHORTMODES$ (see \eqref{def:modes}) \\ \hline
    $\Hydrogen^{\InDemand}_{\hour}$ & Quantity of hydrogen extracted from the storage (kg)
                                        & $\mathbb{R_+}$ \\ \hline
  \end{tabular}
  \caption{Decision variables}
  \label{table:Schiever_decisions}
\end{table}
For every timestep (hour) $\hour \in \HOUR$, we gather all decision variables in the control vector 
\begin{equation}\label{eq:Schiever_decision}
  \Control_{\hour} = \Bp{ 
    \Electricity^{\PPA}_\hour,
    \Electricity^{\Grid}_{\hour+1},
    \load_\hour, 
    \TurnElectrolyser_\hour,
    \Hydrogen^{\InDemand}_\hour
  } \eqfinp
\end{equation}

\subsubsection{Physical state variables} 
\label{Schiever:state_variables}
For every timestep (hour) $\hour \in \overline\HOUR$, we define the physical state variables in
Table~\ref{table:Schiever_state_variables}.
\renewcommand{\arraystretch}{1.2}
\begin{table}[ht]
  \begin{tabular}{|p{1.5cm}|p{7.0cm}|c|} \hline
    \centering{State} & Description & Domain\\ \hline
    \centering$\Stock_\hour$
                      & Quantity of hydrogen in the storage (kg) &$[\underline{\Stock},\overline{\Stock}]$ \\ \hline
    \centering$\Mode_\hour$
                      & Mode of the electrolyser & $\SHORTMODES$ (see~\eqref{def:modes}) \\ \hline 
  \end{tabular}
  \caption{State variables}
  \label{table:Schiever_state_variables}
\end{table}

Economic state variables, that we name \emph{cumulative electricity} $\ElectricityCumul$ and \emph{PPA-stock} $\PPAStock$, will be defined later.

\subsubsection{Uncertain variables}\label{Schiever_uncertain_variables}
For every timestep (hour) $\hour \in \HOUR$, we define the uncertain variables in
Table~\ref{table:Schiever_uncertain_variables}.
\begin{table}[ht]
  \begin{tabular}{|p{2cm}|p{6.5cm}|c|}\hline 
    Uncertainty & Description & Domain\\ \hline\hline
    $\Electricity^{\PV}_{\hour+1}$ & Renewable (PV) electricity (kWh) during $[\hour,\hour{+}1[$&$[0,\overline{\Electricity^{\PV}_{\hour+1}}]$
    \\  \hline
    $\Demand_{\hour+1}$ & Demand of hydrogen (kg) during $[\hour,\hour+1[$& $[0,\overline{\Demand_{\hour{+}1}}]$
    \\ \hline
  \end{tabular}
  \caption{Uncertain variables}
  \label{table:Schiever_uncertain_variables}
\end{table}
\renewcommand{\arraystretch}{1}

\subsubsection{Cost functions}
\label{Schiever:cost_function}
\subsubsubsection{Hourly cost}
For every timestep (hour) $\hour \in \HOUR$, the instantaneous cost is defined as
\begin{equation}
  % \label{eq:instantaneous-cost}
  \begin{split}
    &\InstantaneousCost_\hour\bp{\Electricity^{\PPA}_\hour,
      \Electricity^{\Grid}_{\hour+1},
      \Hydrogen^{\InDemand}_\hour,
      \Demand_{\hour+1}}  \\
    & \hspace{2.0cm} =\underbrace{c^{\PPA}  \Electricity^{\PPA}_\hour}_{\text{PPA cost}}
    + \underbrace{c_{\hour}^{G} (\Electricity^{\Grid}_{\hour+1})_+}_{\text{Grid cost}}
    + \underbrace{ c^d  (\Demand_{\hour+1} - \Hydrogen^{\InDemand}_\hour )_{+}}_{\text{Backup  cost}}  \eqfinv
  \end{split}
  \label{eq:Shriver_deterministic_InstantaneousCost1}
\end{equation} 
where $x_+ = \max(x,0).$

The electricity cost is split into the PPA cost and the grid cost. The backup
cost is linear with respect to the unsatisfied demand, that is, when the quantity
$\Demand_{\hour+1} - \Hydrogen^{\InDemand}_\hour$ is nonnegative and is equal to
zero when the demand is satisfied, that is, when $\Demand_{\hour+1} -
\Hydrogen^{\InDemand}_\hour \le 0$.

\subsubsubsection{Subsidy cost}
We also introduce a \emph{subsidy cost}
\begin{equation} \label{eq:Schiever_final_cost}
  \widetilde{\FinalCost}
  \Bp{ \np{\Electricity^{\PPA}_\hour,\Electricity_{\hour+1}^{\Grid},\Electricity_{\hour+1}^{\PV}}_{\hour \in \HOUR}}  = -c^s\findi{[0,p]}
  \Bgp{\frac{\overbrace{\sum_{\hour \in \HOUR} (\Electricity_{\hour+1}^{\Grid})_+}^{\text{Grid electricity}}}
    {\underbrace{\sum_{\hour \in \HOUR}
        \overline{\Electricity}\wedge \np{\Electricity^{\PPA}_\hour+\Electricity_{\hour+1}^{\PV}} + (\Electricity_{\hour+1}^{\Grid})_+}_{\text{Total electricity}}}}
  \eqfinv
\end{equation}
where $(\cdot \wedge \cdot)=\min(\cdot,\cdot)$ and $c^s$ is the subsidy and $\overline{\Electricity}$ is the maximal
electricity consumption of the hydrogen infrastructure
\begin{equation}\label{eq:E_max}
   \overline{\Electricity} =
  {\ELECTROLYSERFUNCTION(1)\MaximalProductionHour + 
    e^\Compressor \MaximalProductionHour} \eqfinp
\end{equation}

The subsidy cost emphasizes the use of renewable energies. Indeed, it is equal
to $-c^s$ if the cumulated electricity consumption from the grid is less than
$100p\%$ of the cumulated total electricity consumed (that is, from the grid, PPA
and PV). This reflects the fact that a subsidy $c^s$ is granted when renewable
sources contribute to more than $100(1-p)\%$ of the cumulated total electricity
consumption.

In the presence of uncertainty, and considering
Equation~\eqref{eq:schiever-elec-division} which will be elaborated upon
subsequently, the electrical input from the grid
$\Electricity^{\Grid}_{\hour+1}$ may take negative values when selling excess
electricity through the network. However, its impact on the subsidy cost~$\widetilde{\FinalCost}$ is only accounted when electricity is purchased from the
network. Similarly, the sum of PPA electricity $\Electricity^{\PPA}_\hour$ and
PV electricity $\Electricity_{\hour+1}^{\PV}$ supplied to the electrolyser
and the compressor might exceed the upper limit of electricity
($\overline{\Electricity}$) that can be accepted by the
infrastructure. Therefore, the maximum contribution of PPA and PV electricity to the
subsidy cost is $\overline{\Electricity}$, as it is delineated in Equation~\eqref{eq:Schiever_final_cost} (by
employing max and min functions). We rewrite
Equation~\eqref{eq:Schiever_final_cost} as a function of a sum over $\hour \in
\HOUR$ to ease the use of Dynamic Programming in \S\ref{maximizing_phi_lin}
\begin{subequations}
\begin{align}
  \label{eq:final_cost_tilde}
  \widetilde{\FinalCost}
  &
  \Bp{\np{\Electricity^{\PPA}_\hour,\Electricity_{\hour+1}^{\Grid},\Electricity_{\hour+1}^{\PV}}_{\hour \in \HOUR}}
  \nonumber \\
  &= K 
  \bgp{\sum_{\hour \in \HOUR} \Bp{(1{-}p)(\Electricity_{\hour+1}^{\Grid})_+
      - p \bp{\overline{\Electricity}\wedge\na{\Electricity^{\PPA}_\hour+\Electricity_{\hour+1}^{\PV}}}}}
  \eqfinv
\end{align}
where the function $\FinalCost$ is defined by
\begin{equation}\label{eq:heavyside}
  \FinalCost = -c^s\findi{\RR_-}\eqfinv 
\end{equation}
\end{subequations}
where $\RR_- = ]-\infty,0]$ and $\findi{\RR_-}(x)$ is equal to 1 if $x \in \findi{\RR_-}$ and 0 otherwise.

We reformulate the subsidy cost in Equation~\eqref{eq:final_cost_tilde} as the
final cost $\FinalCost(\ElectricityCumul_\horizon)$, where state~$\ElectricityCumul$, that we name cumulative electricity, has the following dynamics
\begin{equation}\label{eq:cumul_dynamics}
  \begin{cases} 
    \ElectricityCumul_0 = 0 \eqfinv \\ 
    \ElectricityCumul_{\hour+1} = \ElectricityCumul_\hour +
    (1-p)(\Electricity_{\hour+1}^{\Grid})_+-p\min(\overline{\Electricity},
    \Electricity^{\PPA}_\hour+\Electricity_{\hour+1}^{\PV}) \eqsepv \forall \hour \in \HOUR
    \eqfinp
  \end{cases}
\end{equation}

\subsubsubsection{Summary table}
The parameters used in the instantaneous and subsidy cost functions are described in the following Table~\ref{table:cost-variables}.
\begin{table}[ht]
  \begin{tabularx}{\linewidth}{|c|X|c|}
    \hline \rule{0pt}{2ex}
    Variable & Description & Value \\ \hline \hline 
    $c^{\PPA}$ &Unitary cost of energy provided by PPA (\euro/kWh)&\numprint{0.075}\euro/kWh \rule{0pt}{2ex} \\ \hline \hline 
    $c_{\hour}^{G}$ &Unitary cost of energy provided by buying from Grid (\euro/kWh) at timestep \hour& Figure~\ref{fig:Grid_cost} \rule{0pt}{2ex} \\ \hline \hline 
    $c^d$ &Unitary cost of not satisfying the hydrogen demand (\euro/kg)&\numprint{5000}\euro/kg \rule{0pt}{2ex} \\ \hline \hline 
    $c^s$ &Subsidy (\euro)&\numprint{5e6}\euro \rule{0pt}{2ex} \\ \hline \hline 
    $p$ & Subsidy threshold &\numprint{0.2} \\ \hline \hline 
    $\overline{\Electricity}$ &Maximal electricity consumption of the hydrogen infrastructure (kWh) &\numprint[kWh]{1403} \\ \hline 
  \end{tabularx}
  \caption{Parameters of the instantaneous and subsidy cost functions}
  \label{table:cost-variables}
\end{table}

\subsubsection{Constraints}
In this section, we describe the electrolyser, production and electricity constraints, and we give the electrolyser mode and stock dynamics.
\label{Schiever:constraints}

\subsubsubsection{Electrolyser constraints}

For every timestep (hour) $\hour \in \HOUR$, the load decision
$\load_\hour$ and the decision $\TurnElectrolyser_\hour$ to change the electrolyser mode
 are linked one to the other. Indeed, first, if we
turn the electrolyser to \IDLE\ mode (resp. \COLD\ mode) by using
$\TurnElectrolyser_\hour=\IDLE$ (resp. $\TurnElectrolyser_\hour=\COLD$), then
the electrolyser cannot produce hydrogen and therefore the load must be equal to zero
($\load_\hour=0)$.  Second, if we turn the electrolyser to \START\ mode,
$\TurnElectrolyser_\hour=\START$, then the load is to be set in the interval~$[\MinLoad,1]$.
The coupling constraint, between the load decision and the decision to change the electrolyser mode,
is mathematically formalized as follows
\begin{align}
  \load_\hour
  &\in \LoadSetFunction(\TurnElectrolyser_\hour) \eqfinv
    \label{eq:elementary_mode_dynamics1}
    \text{ with }
    \LoadSetFunction: \SHORTMODES \ni \TurnElectrolyser_\hour
    \mapsto \begin{cases}
      \na{0} & \mbox{if } \TurnElectrolyser_\hour \in \mbox{\{\COLD,\IDLE\}} \eqfinv \\
      [\MinLoad,1] & \mbox{if } \TurnElectrolyser_\hour = \mbox{\START}
      \eqfinp
    \end{cases}
\end{align}

\subsubsubsection{Production constraints}
The hydrogen production~(in~kg) during a time interval $[h,h+1[$, is given by
\begin{equation}\label{eq:hydrogen_production}
  \HydrogenProduced_\hour =
  \load_\hour\MaximalProduction(\Mode_\hour,\TurnElectrolyser_\hour)\MaximalProductionHour \eqfinv
\end{equation}
where $\load_\hour$ is the load
decision, the function $\MaximalProduction(\Mode_\hour,\TurnElectrolyser_\hour): \SHORTMODES\times\SHORTMODES \to [0,1]$ gives the proportion -- expressed in $\%$ --
of the current time interval $[h,h+1[$ which is used for hydrogen production when the electrolyser evolves from mode
$\Mode_\hour$ to mode $\TurnElectrolyser_\hour$, and
$\MaximalProductionHour$ is the maximal quantity of hydrogen that the
electrolyser can produce during one hour.

The total electricity furnished by the three energy sources is used by both the
electrolyser and the compressor, that is
\begin{subequations}
  \begin{equation}
  \underbrace{\Electricity^{\PPA}_\hour + \Electricity^{\Grid}_{\hour+1}+ \Electricity^{\PV}_{\hour+1}}_{\text{total electricity}}
  = \underbrace{\Electricity^{\Electrolyser}_\hour}_{\text{\parbox{3cm}{electricity used by \\ the electrolyser}}}
    + \underbrace{\Electricity^{\Compressor}_\hour}_{\text{\parbox{3cm}{electricity used by \\the compressor}}} \eqfinv
    \label{eq:schiever-elec-division}
\end{equation}
where the electricity used by the electrolyser is
\begin{equation}\label{eq:electrolyser_electricity}
  \Electricity^{\Electrolyser}_\hour
  = \underbrace{\ELECTROLYSERFUNCTION(\load_\hour)\HydrogenProduced_\hour}_{\text{\parbox{4cm}{electricity used by the \\ electrolyser on \START\ mode}}}
  + \; \underbrace{e^{\IDLE} \findi{\IDLE}(\TurnElectrolyser_\hour)\mu(\Mode_\hour,\TurnElectrolyser_\hour)}_{\text{\parbox{4cm}{electricity used by the \\ electrolyser on \IDLE\ mode}}}\eqfinv
\end{equation}
and the electricity used by the compressor is
\begin{equation}\label{eq:compressor_electricity}
  \Electricity^{\Compressor}_\hour= e^{\Compressor} \HydrogenProduced_\hour \eqfinp
\end{equation}
\end{subequations}

Equation~\eqref{eq:schiever-elec-division} implies that if the electricity
generated by photovoltaic solar panels $\Electricity^{\PV}_{\hour+1}$ is high,
the surplus is sold to the grid, resulting in $\Electricity^{\Grid}_{\hour+1}$
taking negative values.

\subsubsubsection{Electricity constraints}
We now describe the electricity constraints. The first and third one are induced
constraints derived from Equation~\eqref{eq:schiever-elec-division}. The second one is a
constraint induced by the PPA contract.
\begin{itemize}

 \item 
As the right hand side of Equation~\eqref{eq:schiever-elec-division}, that is, the electricity consumption of the hydrogen infrastructure, is upper bounded by $\overline{\Electricity}$, we obtain that the left hand side of Equation~\eqref{eq:schiever-elec-division} is also upper bounded, that is
\begin{equation}\label{eq:schiever_total_elec_bounded}
  \Electricity^{\PPA}_\hour + \Electricity^{\Grid}_{\hour+1}+ \Electricity^{\PV}_{\hour+1}
  \leq \overline{\Electricity} 
  \eqfinp 
\end{equation}

  \item
The cumulated energy from PPA is upper bounded (as imposed by the contract)
\begin{equation}\label{eq:ppa}
  \sum_{\hour \in \HOUR} \Electricity^{\PPA}_\hour \leq \overline{\Electricity^{\PPA}} \eqfinv
\end{equation}
where $\overline{\Electricity^{\PPA}}$ is the maximal available quantity of PPA
electricity during the time horizon.

\item 
As $\Electricity^{\PPA}_\hour$ is upper bounded by $\overline{\Electricity^{\PPA}}$ (see Equation~\eqref{eq:ppa}), $\Electricity^{\PV}_{\hour+1}$ is upper bounded by $\overline{\Electricity^{\PV}_{\hour+1}}$ (see Table~\ref{table:Schiever_uncertain_variables}) and as the right hand side of Equation~\eqref{eq:schiever-elec-division} is nonnegative for all $\hour \in \HOUR$, we obtain that the grid electricity $\Electricity^{\Grid}_{\hour+1}$ is lower bounded for all $\hour \in \HOUR$
\begin{equation}\label{eq:schiever_grid_bounded}
 \underline{\Electricity^{\Grid}} \leq \Electricity^{\Grid}_{\hour+1} \text{ with }{\underline{\Electricity^{\Grid}} = -\overline{\Electricity^{\PPA}}-\overline{\Electricity^{\PV}}} \eqfinp
\end{equation}

\end{itemize}
To ease the use of Dynamic Programming, we reformulate Constraint~\eqref{eq:ppa}
as $\PPAStock_\horizon \geq 0$, where state $\PPAStock$, that we name PPA-stock, has the
following dynamics
\begin{equation}\label{eq:PPA_dynamcis}
    \PPAStock_0 = \overline{\Electricity^{\PPA}} \text{ and } \PPAStock_{\hour+1} = \PPAStock_\hour - \Electricity^{\PPA}_\hour \eqsepv \forall \hour \in \HOUR \eqfinp
\end{equation}

Assuming constraint~\eqref{eq:PPA_dynamcis}, and noting that
$\Electricity^{\PPA}_\hour$ is nonnegative for all $\hour \in \HOUR$, it is
immediate to see that constraint $\PPAStock_\horizon \geq 0$ is equivalent to
$\Electricity^{\PPA}_{\hour}\leq \PPAStock_\hour$ for all $\hour \in \HOUR$.

\iffalse
Moreover, since $\Electricity^{\PPA}_\hour$ is nonnegative for all $\hour \in
\HOUR$ assuming constraint~\eqref{eq:PPA_dynamcis}, it follows that constraint $\PPAStock_\horizon \geq 0$ is equivalent to
$\Electricity^{\PPA}_{\hour}\leq \PPAStock_\hour$ for all $\hour \in
\HOUR$. Indeed, the implication $\Rightarrow$, assume that
dynamics~\eqref{eq:PPA_dynamcis} hold, and $\Electricity^{\PPA}_\hour$ is nonnegative for all $\hour \in
\HOUR$, and $\PPAStock_\horizon \geq 0$. Then, we prove that $\Electricity^{\PPA}_{\hour}\leq \PPAStock_\hour$ for all $\hour \in \HOUR$ by contradiction. Assume that
there exists $\hour' \in \HOUR$ such that $\PPAStock_{\hour'}
<\Electricity^{\PPA}_{\hour'}$. Then, using 
dynamics~\eqref{eq:PPA_dynamcis}, we have that
$\PPAStock_{\hour'+1} < 0$ and therefore, as the sequence $(\PPAStock_\hour)_{\hour \in
  \HOUR}$ is nonincreasing, we have that $\PPAStock_\horizon \leq
\PPAStock_{\hour'+1} < 0$ which leads to a contradiction.

Conversely $(\Leftarrow)$, assume that
dynamics~\eqref{eq:PPA_dynamcis} hold, and that $\Electricity^{\PPA}_\hour$ is nonnegative for all $\hour \in
\HOUR$ and that $\Electricity^{\PPA}_{\hour}\leq \PPAStock_\hour$ for all $\hour \in
\HOUR$. Then, we obtain at time
$\hour=\horizon{-}1$ that $\Electricity^{\PPA}_{\horizon-1} \leq
\PPAStock_{\horizon-1}$ which combined with
dynamics~\eqref{eq:PPA_dynamcis} at time
$\hour=\horizon{-}1$ gives $\PPAStock_\horizon = \PPAStock_{\horizon-1} -
\Electricity^{\PPA}_{\horizon-1}\geq 0$.
\fi 

\subsubsubsection{Electrolyser mode dynamics} 
The dynamics of the mode of the electrolyser is given by 
\begin{equation}\label{eq:mode_dynamics}
    \Mode_{\hour+1} = 
     \TurnElectrolyser_\hour
\eqfinp
\end{equation}

\subsubsubsection{Stock dynamics and constraints}
The dynamics of the stock is given by
\begin{equation}\label{eq:stock_dynamics}
  \Stock_{\hour +1} = \Stock_{\hour} + \HydrogenProduced_\hour - \min(\Demand_{\hour+1},\Hydrogen^{\InDemand}_\hour) \eqfinv
\end{equation}
where $\min(\Demand_{\hour+1},\Hydrogen^{\InDemand}_\hour)$ reflects that, if the
quantity ($\Hydrogen^{\InDemand}_\hour$) extracted from the stock is greater
than the demand ($\Demand_{\hour+1}$), we re-inject the unused quantity of
hydrogen in the stock. Moreover, the stock is upper and lower bounded
\begin{equation}\label{eq:stock_bounded}
 \underline{\Stock} \leq \Stock_\hour \leq \overline{\Stock} \eqfinp
\end{equation}

\section{Problem formulation and resolution}\label{Schiever_problem_formulation_and_resolution}
In Sect.~\ref{Schiever:characteristics}, we have presented the hydrogen infrastructure and given its mathematical modeling. We now turn to formulate an optimization problem corresponding to the management of this infrastructure at minimum cost. In~\S\ref{Schiever_problem_formulation}, we give the problem formulation and, in~\S\ref{Schiever:price_decompo_method}, we propose a resolution method based on price decomposition.

\subsection{Problem formulation}\label{Schiever_problem_formulation}

We consider a probability space  $(\Omega,\mathcal{F},\mathds{P})$. Mathematical expectation is denoted by~$\mathbb{E}$. Random variables are denoted by bold capital letters like~$\va{Z}$.
The $\sigma$-field generated by~$\va{Z}$ is denoted by $\sigma(\va{Z})$. This notation is used to represent nonanticipativity constraints. All the variables introduced in Sect.\ref{Schiever:characteristics} are now random variables, hence represented by bold letters. We assume that $(\va{\Demand}_{\hour+1},\va{\Electricity}^{\PV}_{\hour+1})$ has a given probability distribution with finite support for all $\hour \in \HOUR$.

Gathering all that has been done in Sect.\ref{Schiever:characteristics}, we formulate the following minimization problem
\begin{subequations}\label{eq:Schiever_stochastic_formulation}
  \begin{equation}\label{eq:problem-formulation-stochastic}
    \begin{split}
      \min\limits_{
      \substack{
      \np{
      \va{\Control}_\hour}_{\hour \in \HOUR}
      \\ \np{\va{\State}_\hour }_{\hour \in \overline{\HOUR}}}}
  &
    \mathbb{E} \Big[ 
    \underbrace{\sum\limits_{\hour \in \HOUR} c^d  (\va{\Demand}_{\hour+1} - \va{\Hydrogen}^{\InDemand}_\hour )_{+}}_{\text{backup  cost}}
    +
    \underbrace{\sum\limits_{\hour \in \HOUR} c^{\PPA}  \va{\Electricity}^{\PPA}_\hour+ c_{\hour}^{G}  (\va{\Electricity}^{\Grid}_{\hour+1})_+
    + \FinalCost(\va{\ElectricityCumul}_\horizon)}_{\text{electricity cost}}
    \Big]
    \end{split}    
  \end{equation}

   subject to the following constraints for all $\hour \in \HOUR$
  \begin{align}
 &\substack{\text{operational}\\ \text{constraints}} \begin{cases} \label{eq:Schiever_stoch_operational_constraint}
 &\vaHydrogenProduced_\hour = \vaload_\hour\MaximalProduction(\vaMode_\hour,\vaTurnElectrolyser_\hour)\MaximalProductionHour   \eqsepv \\
  &\va{\Electricity}^{\Electrolyser}_\hour = \ELECTROLYSERFUNCTION(\vaload_\hour) \vaHydrogenProduced_\hour   + e^{\IDLE} \findi{\IDLE}  (\vaTurnElectrolyser_\hour)\mu(\vaMode_\hour,\vaTurnElectrolyser_\hour) \eqfinv
    \\
  &\va{\Electricity}^{\Compressor}_\hour = e^\Compressor\vaHydrogenProduced_\hour    \eqfinv
   \\
   &\vaload_\hour \in \LoadSetFunction(\vaTurnElectrolyser_\hour)      \eqfinv
\\
    & \Stock_0 \text{ given } \eqsepv \va{\Stock}_{\hour +1} 
    = \va{\Stock}_{\hour} + \vaHydrogenProduced_\hour  - \min(\va{\Demand}_{\hour+1} , \va{\Hydrogen}^{\InDemand}_\hour)  \eqfinv
    \\
      &\Mode_0 \text{ given } \eqsepv \vaMode_{\hour+1} =
     \vaTurnElectrolyser_\hour  \eqfinv
    \\
   &\underline{\Stock} \leq \va{\Stock}_\hour \leq \overline{\Stock} \eqfinv
    \\
    &0 
    \leq 
      \va{\Hydrogen}^{\InDemand}_\hour\eqfinv
    \end{cases}
    \end{align}
    \begin{align}
    &\hspace{-1.4cm}\substack{\text{electricity}\\ \text{constraints}}\begin{cases} \label{eq:Schiever_stoch_electricity_constraint}
        &\PPAStock_0=\overline{\Electricity^{\PPA}} \eqsepv \va{\PPAStock}_{\hour+1} = \va{\PPAStock}_\hour - \va{\Electricity}^{\PPA}_\hour  \eqfinv 
    \\
    &\ElectricityCumul_0=0 \eqsepv \va{\ElectricityCumul}_{\hour+1} =\va{\ElectricityCumul}_\hour +(1-p)(\va{\Electricity}_{\hour+1}^{\Grid})_+ \\ &\hspace{3cm} -p\min(\overline{\Electricity},\va{\Electricity}^{\PPA}_\hour+\va{\Electricity}_{\hour+1}^{\PV}) \eqfinv 
    \\
   &\va{\Electricity}^{\PPA}_{\hour} \leq \va{\PPAStock}_\hour \eqfinv 
    \\
    &0 \leq \va{\Electricity}^{\PPA}_\hour \eqfinv
        \\
        &\va{\Electricity}^{\PPA}_\hour + \va{\Electricity}^{\Grid}_{\hour+1}+ \va{\Electricity}^{\PV}_{\hour+1} \leq \overline{\Electricity} \eqfinv
        \\
        &\underline{\Electricity^{\Grid}} \leq \va{\Electricity}^{\Grid}_{\hour+1} \eqfinv 
      \end{cases}
    \end{align}
    \begin{align}
     &\hspace{-3.1cm}\substack{\text{coupling}\\ \text{constraint}}\begin{cases} 
     &\va{\Electricity}^{\PPA}_\hour + \va{\Electricity}^{\Grid}_{\hour+1} +  \va{\Electricity}^{\PV}_{\hour+1}
     = \va{\Electricity}^{\Electrolyser}_\hour + \va{\Electricity}^{\Compressor}_\hour \label{eq:Schiever_stoch_coupling_constraint}  \eqfinv
   \end{cases}
    \end{align}
    \begin{align}\label{eq:schiever_nonancitipativity}
      &\hspace{-0.6cm}\substack{\text{nonanticipativity}\\ \text{constraints}} \begin{cases}
          \sigma(\va{\Electricity}^{\PPA}_\hour,
  \vaload_\hour, 
  \vaTurnElectrolyser_\hour,
  \va{\Hydrogen}^{\InDemand}_\hour)
  &\subset \sigma\bp{(\va{\Demand}_{\hour '},\va{\Electricity}^{\PV}_{\hour'}) \eqsepv  \hour' \le \hour} \eqfinv
  \\
    \sigma(\va{\Electricity}^{\Grid}_{\hour+1})
  &\subset \sigma\bp{(\va{\Demand}_{\hour '},\va{\Electricity}^{\PV}_{\hour'}) \eqsepv  \hour' \le \hour+1} \eqfinv
\end{cases}
    \end{align}
\end{subequations}
where for every timestep (hour) $\hour \in \overline{\HOUR}$, the state vector $\va{\State}_\hour$ is defined by 
\begin{equation}\label{eq:Schiever_state}
  \va{\State}_\hour = \Bp{ 
    \va{\Stock}_\hour,
    \vaMode_\hour,
    \va{\PPAStock}_\hour, 
    \va{\ElectricityCumul}_\hour
  } \eqfinp
\end{equation}
The state at initial time, $\State_0$, is deterministic. Now, we comment the different blocks of constraints.
\begin{description}
\item[Operational constraints] The block
  constraints~\eqref{eq:Schiever_stoch_operational_constraint} describes the
  constraints related to the electrolyser, compressor and storage, as outlined in Equations~\eqref{eq:elementary_mode_dynamics1}, \eqref{eq:hydrogen_production}, \eqref{eq:electrolyser_electricity}, \eqref{eq:compressor_electricity}, \eqref{eq:mode_dynamics}, \eqref{eq:stock_dynamics} and~\eqref{eq:stock_bounded}.
\item[Electricity constraints] The block
  constraints~\eqref{eq:Schiever_stoch_electricity_constraint} describes the
  constraints related to the electricity sources, as discussed in Equations~\eqref{eq:cumul_dynamics}, \eqref{eq:schiever_total_elec_bounded}, \eqref{eq:ppa}, \eqref{eq:schiever_grid_bounded} and~\eqref{eq:PPA_dynamcis}.
\item[Coupling constraint] The
  constraints~\eqref{eq:Schiever_stoch_coupling_constraint} links the electricity
  consumption of the equipment and the electricity furnished by the three
  electricity sources, as~described in Equation~\eqref{eq:schiever-elec-division}.
\item[Nonanticipativity constraints] The decisions
  $(\va{\Electricity}^{\PPA}_\hour, \vaload_\hour, \vaTurnElectrolyser_\hour,
  \va{\Hydrogen}^{\InDemand}_\hour)$ at hour $\hour$ are taken knowing the
  uncertainties up to hour $\hour$, which can be written as the first constraint
  of~\eqref{eq:schiever_nonancitipativity}.  Moreover,
  $\va{\Electricity}^{\PV}_{\hour+1}$ is observed at the end of hour $\hour$; therefore, we
  require a recourse action to ensure the validity of
  constraint~\eqref{eq:Schiever_stoch_coupling_constraint}. For that reason, the
  decision $\va{\Electricity}^{\Grid}_{\hour+1}$ is taken knowing the
  uncertainties up to $\hour+1$, which can be written as the second constraint
  of~\eqref{eq:schiever_nonancitipativity}.
\end{description}

Note that the constraints defined in Problem~\eqref{eq:Schiever_stochastic_formulation} are almost sure constraints, that means they hold for $\mathds{P}$-almost all realizations of the random vector $(\va{\Demand}_{\hour+1},\va{\Electricity}^{\PV}_{\hour+1})_{\hour \in \HOUR}$  ($\mathds{P}$-\as).
The solutions of Problem~\eqref{eq:Schiever_stochastic_formulation} are sequences of hourly policies, that return the optimal decision for each hour $\hour \in \HOUR$ given the current state of the infrastructure $\State_\hour$.

\subsection{Resolution using price decomposition}\label{Schiever:price_decompo_method}
When the random variables $(\va{\Demand}_{\hour+1},\va{\Electricity}^{\PV}_{\hour+1})_{\hour \in \HOUR}$  are stagewise independent, Dynamic Programming provides an optimal solution to Problem~\eqref{eq:Schiever_stochastic_formulation}. Anyway, without stagewise independence, Dynamic Programming can be used to obtain admissible solution. However, 
Solving Problem~\eqref{eq:Schiever_stochastic_formulation} using Dynamic Programming is numerically difficult for the following reasons:  one week horizon with hourly decisions gives $168$ timesteps; a four dimensional state (see Equation~\eqref{eq:Schiever_state}) where the PPA-stock ($\PPAStock$)
and the cumulated electricity ($\ElectricityCumul)$ take values in large
interval and require a fine discretization which is numerically demanding; five decisions at each hour (see Equation~\eqref{eq:Schiever_decision}) have to be taken into account in the optimization algorithm.

\subsubsection{Sketch of the method}
As solving Problem~\eqref{eq:Schiever_stochastic_formulation} using Dynamic
Programming is numerically difficult, we propose an original decomposition
method in order to improve numerical tractability with the following steps.

\begin{enumerate}
\item We use \emph{Lagrangian relaxation} of coupling
  constraints~\eqref{eq:Schiever_stoch_coupling_constraint} to obtain an
  additive dual function
  $\phi[\FinalCost]: \lambda \in \RR^{\horizon} \mapsto \phi^O(\lambda) +\phi^E[\FinalCost](\lambda)$, where $\FinalCost$ is the final cost defined in Equation~\eqref{eq:heavyside}.  Denoting by
  $\mathrm{val}(\mathcal{D}[K])$ the value of the associated dual problem, that is
  $\mathrm{val}(\mathcal{D}[K]) = \sup_{\lambda \in \RR^{\horizon}} \phi[\FinalCost](\lambda)$, we
  obtain by weak duality that
  $\mathrm{val}(\mathcal{D}[\FinalCost]) \leq \mathrm{val}(\mathcal{P}[\FinalCost])$, where
  $\mathrm{val}(\mathcal{P}[\FinalCost])$ is the value of
  Problem~\eqref{eq:Schiever_stochastic_formulation}.
    
\item
  We make a detour by considering a new additive function
  $\widehat{\phi}[\widehat{\FinalCost}]: \lambda \in \RR^{\horizon} \mapsto
  \phi^O(\lambda)+\widehat{\phi}^E[\widehat{\FinalCost}](\lambda)$, where
  $\widehat{\FinalCost}$ is a nondecreasing convex proper function, and
  where, for each value of $\lambda \in \RR^{\horizon}$,
  $\widehat{\phi}^E[\widehat{\FinalCost}](\lambda)$ is the value of a convex optimization
  problem which is equivalent (in the sense that the value of the two problems
  coincide and solutions to either problem can be derived from one another) to
  the optimization problem whose value is
  $\phi^E[\widehat{\FinalCost}](\lambda)$.  Moreover, we prove in
  Proposition~\ref{Schiever_proposition} that $\widehat{\FinalCost}$ can be
  chosen in such a way that
  $\mathrm{val}(\mathcal{D}[\widehat{\FinalCost}]) \leq \mathrm{val}(\mathcal{D}[\FinalCost])$.
  
\item  \label{step3}
  We have that $\sup_{\lambda \in \RR^{\horizon}} \widehat{\phi}[\widehat{\FinalCost}](\lambda)
  = \sup_{\lambda \in \RR^{\horizon}} \phi[\widehat{\FinalCost}](\lambda) = \mathrm{val}(\mathcal{D}[\widehat{K}])$
  where $\mathrm{val}(\mathcal{D}[\widehat{K}])$ is the value of the Lagrangian dual (with respect to the coupling
  constraint~\eqref{eq:Schiever_stoch_coupling_constraint})
  of Problem~\eqref{eq:Schiever_stochastic_formulation}
  where the final cost $\FinalCost$ is replaced by $\widehat{\FinalCost}$.
  We numerically maximize the new function $\widehat{\phi}[\widehat{\FinalCost}]$.
\item For each value of $\lambda$, we can build an admissible 
  policy $\pi^{\lambda}$ for the original Problem~\eqref{eq:Schiever_stochastic_formulation} and
  we can obtain by Monte-Carlo simulation an approximation of the cost associated to that
  policy denoted by $\mathrm{val}(\mathcal{P_{\pi^{\lambda}}}[K])$ which gives an upper bound of
  $\mathrm{val}(\mathcal{P}[K])$ the value of Problem~\eqref{eq:Schiever_stochastic_formulation}.
  We use this fact to simulate an  admissible 
  policy associated to the best $\lambda$ obtained at the previous step~\ref{step3}. 
\end{enumerate}

Summarizing the previous steps, we have
\begin{equation}\label{eq:comparison_dual_primal_stochastic}
    \lefteqn{\underbrace{\phantom{\mathrm{val}(\mathcal{D}[\widehat{\FinalCost}]) \leq \mathrm{val}(\mathcal{D}[\FinalCost]) }}_{\text{using Proposition~\ref{Schiever_proposition}}}}     \lefteqn{\mathrm{val}(\mathcal{D}[\widehat{\FinalCost}]) \leq \overbrace{\phantom{\mathrm{val}(\mathcal{D}[\FinalCost])  \leq \mathrm{val}(\mathcal{P}[\FinalCost])  }}^{\text{Weak duality}}}    \mathrm{val}(\mathcal{D}[\widehat{\FinalCost}]) \leq \mathrm{val}(\mathcal{D}[\FinalCost]) \leq \underbrace{\mathrm{val}(\mathcal{P}[\FinalCost]) \leq \mathrm{val}(\mathcal{P_{\pi^{\lambda}}}[\FinalCost])}_{\text{Feasibility of policy $\pi^\lambda $}} \eqfinv
  \end{equation}
where the final cost function $\widehat{\FinalCost}$ is the one described
  at item~\ref{proposition_item3} of Proposition~\ref{Schiever_proposition}.

The duality gap of Problem~\eqref{eq:Schiever_stochastic_formulation}, which is
defined by
$\mathrm{val}(\mathcal{P}[\FinalCost])- \mathrm{val}(\mathcal{D}[\FinalCost])$, is numerically
intractable as it requires maximizing the dual function
$\phi[\FinalCost]$. However, by using
Equation~\eqref{eq:comparison_dual_primal_stochastic}, we can bound the duality
gap
$\mathrm{val}(\mathcal{P}[\FinalCost])- \mathrm{val}(\mathcal{D}[\FinalCost])$ by
$\mathrm{val}(\mathcal{D}[\widehat{\FinalCost}]) -
\mathrm{val}(\mathcal{P_{\pi^{\lambda}}}[\FinalCost])$, which is numerically tractable.

\subsubsection{Relaxation with deterministic Lagrange multiplier}

We observe that Problem~\eqref{eq:Schiever_stochastic_formulation} is the
minimum of the sum of a backup cost and an electricity cost with two different
blocks of constraints~\eqref{eq:Schiever_stoch_operational_constraint} and
\eqref{eq:Schiever_stoch_electricity_constraint} (each block having its own
variables) and one coupling
constraint~\eqref{eq:Schiever_stoch_coupling_constraint} for all
$\hour \in \HOUR$.
\iffalse
\begin{figure}[htpp]
  \centering
  \fbox{\includegraphics[width=0.80\textwidth]{}}
  \caption{Decomposition of the original Problem~\eqref{eq:Schiever_stochastic_formulation} into electricity allocation and operational problems}
  \label{fig:decomposition}
\end{figure}
\fi

 We consider a decomposition algorithm by dualizing the coupling
 constraint~\eqref{eq:Schiever_stoch_coupling_constraint}.
As constraint~\eqref{eq:Schiever_stoch_coupling_constraint} is stochastic, it is natural to dualize with stochastic Lagrange multipliers. However,
the optimization over stochastic Lagrange multipliers presents intractability
challenges. Therefore, we only consider deterministic Lagrange multipliers. Indeed, we will observe that weak duality is enough to obtain good numerical bounds. We
recall that maximizing the stochastic dual function over the restricted set of
deterministic multipliers leads to a lower bound of the optimal value of the original problem. The decomposition method is presented now.

Given a deterministic multiplier $\lambda = (\lambda_\hour)_{\hour \in \HOUR}
\in \mathbb{R}^\horizon$, we denote by $\phi[\FinalCost](\lambda)$ the dual
function associated with the final cost $\FinalCost$ of
Problem~\eqref{eq:Schiever_stochastic_formulation}
\begin{align}
  \phi[\FinalCost](\lambda) =
  &\min\limits_{
    \substack{\np{ 
    \va{\Control}_\hour}_{\hour \in \HOUR} \\\np{\va{\State}_\hour }_{\hour \in \overline{\HOUR}}}}
  \mathbb{E}  \Big[
  \sum\limits_{\hour \in \HOUR} c^d  (\va{\Demand}_{\hour+1} - \va{\Hydrogen}^{\InDemand}_\hour )_{+}
  + \sum\limits_{\hour \in \HOUR} c^{\PPA}  \va{\Electricity}^{\PPA}_\hour+ c_{\hour}^{g} (\va{\Electricity}^{\Grid}_{\hour+1})_+
  \nonumber
  \\ 
  &\hspace{0.5cm}+\FinalCost(\va{\ElectricityCumul}_\horizon) +
    \sum\limits_{\hour \in \HOUR} \lambda_\hour(-\va{\Electricity}^{\PPA}_\hour -
    \va{\Electricity}^{\Grid}_{\hour+1} -  \va{\Electricity}^{\PV}_{\hour+1}
    +\va{\Electricity}^{\Electrolyser}_\hour + \va{\Electricity}^{\Compressor}_\hour) \Big] 
    % \InstantaneousCost_\hour(\control^{\tr}_h)
    \label{eq:schiever_dual_function}
  \\ 
  &\hspace{0.5cm}\text{s.t.~\eqref{eq:Schiever_stoch_operational_constraint}, \eqref{eq:Schiever_stoch_electricity_constraint}, \eqref{eq:schiever_nonancitipativity}}
    \eqfinp \nonumber
\end{align}

Note that the final cost $\FinalCost$ is put as a parameter of the dual function
$\phi$ for future use and, we denote by Problem~$[\FinalCost]$-\eqref{eq:schiever_dual_function} the Problem~\eqref{eq:schiever_dual_function} where the final cost $\FinalCost$ is considered.

By weak duality, the dual
function $\phi[\FinalCost](\lambda)$ is a lower bound of the value of
Problem~\eqref{eq:Schiever_stochastic_formulation} for all deterministic
multiplier $\lambda \in \mathbb{R}^\horizon$
\begin{equation}\label{eq:Schiever_stochastic_weak_duality}
  \phi[\FinalCost](\lambda) \leq \mathrm{val}(\mathcal{P}[\FinalCost]) \eqsepv \forall \lambda \in \mathbb{R}^\horizon \eqfinp
\end{equation}
We rewrite the dual function $\phi[\FinalCost]$ as a sum 
\begin{subequations}
  \begin{equation}
    \phi[\FinalCost](\lambda) = \phi^{O}(\lambda) + \phi^{\EL}[\FinalCost](\lambda) \eqfinv \label{eq:Schiever_stochastic_dual_function}
  \end{equation}
  where the function $\phi^{O}$, which represents what we call the \emph{operational problem}, is defined for all $\lambda \in \mathbb{R^{\horizon}}$ by 
  \begin{align}
    \phi^{O}(\lambda)
    &=
      \min_{
      \substack{
      \np{\vaTurnElectrolyser_\hour, \vaload_\hour, \va{\Hydrogen}^{\InDemand}_\hour}_{\hour \in \HOUR}
    \\
    \np{\va{\Stock}_\hour,\vaMode_\hour}_{\hour \in \overline{\HOUR}}}}
    \mathbb{E} \Big[  \sum_{\hour \in \HOUR} c^d  (\va{\Demand}_{\hour+1} - \va{\Hydrogen}^{\InDemand}_\hour )_{+}
    + \sum_{\hour \in \HOUR}\lambda_\hour(\va{\Electricity^{\Electrolyser}_\hour} + \va{\Electricity^{\Compressor}_\hour} ) \Big]
    \label{eq:Schiever_stochastic_decompo_operational}  \\
    &\hspace{2cm}\text{s.t.~\eqref{eq:Schiever_stoch_operational_constraint} \nonumber \eqfinv}
    \\
    &\hspace{2cm}\text{and }\sigma(\vaload_\hour, \vaTurnElectrolyser_\hour, \va{\Hydrogen}^{\InDemand}_\hour)
      \subset \sigma\bp{\va{\Demand}_{\hour '} \eqsepv \hour' \le \hour} \eqsepv \forall \hour \in \HOUR \eqfinv
      \nonumber
  \end{align}
  and the function $\phi^{\EL}$, which represents what we call the \emph{electricity allocation problem}, is defined for all $\lambda \in \mathbb{R^{\horizon}}$ by
  \begin{align}
    \phi^{\EL}[\FinalCost](\lambda)
    &= \min_{
      \substack{
      \np{\va{\Electricity}^{\PPA}_\hour,  \va{\Electricity}^{\Grid}_{\hour+1}}_{\hour \in \HOUR}\\
    \np{\va{\PPAStock}_{\hour},\va{\ElectricityCumul}_{\hour}}_{\hour \in \overline{\HOUR}}}}
    \mathbb{E} \Big[\sum_{\hour \in \HOUR}  \InstantaneousCost_\hour^{\EL}\bp{\va{\Electricity}^{\PPA}_\hour, \va{\Electricity}^{\Grid}_{\hour+1},\va{\Electricity}^{\PV}_{\hour+1},\lambda_\hour} + \FinalCost(\va{\ElectricityCumul}_\horizon) \Big] \label{eq:Schiever_stochastic_decompo_electricity} \\
 &\hspace{3cm}\text{s.t. ~\eqref{eq:Schiever_stoch_electricity_constraint} \eqfinv \nonumber} 
  \\
  &\hspace{3cm}\text{and }\sigma(\va{\Electricity}^{\PPA}_\hour) \subset \sigma\bp{\va{\Electricity}^{\PV}_{\hour '} \eqsepv \hour' \le \hour} \eqsepv \forall \hour \in \HOUR \eqfinv \nonumber \\
    &\hspace{3cm}\sigma(\va{\Electricity}^{\Grid}_{\hour+1}) \subset \sigma\bp{\va{\Electricity}^{\PV}_{\hour '} \eqsepv  \hour' \le \hour+1} \eqsepv \forall \hour \in \HOUR \eqfinv
                     \nonumber
  \end{align}
  where for all $\hour \in \HOUR$
\begin{equation}
  \InstantaneousCost_\hour^{\EL}\bp{\va{\Electricity}^{\PPA}_\hour,
    \va{\Electricity}^{\Grid}_{\hour+1},\va{\Electricity}^{\PV}_{\hour+1}, \lambda_\hour} = c^{\PPA}  \va{\Electricity}^{\PPA}_\hour+ c_{\hour}^{G} (\va{\Electricity}^{\Grid}_{\hour+1})_+  -
  \lambda_\hour(\va{\Electricity}^{\PPA}_\hour + \va{\Electricity}^{\Grid}_{\hour+1} +  \va{\Electricity}^{\PV}_{\hour+1}) \eqfinp
\end{equation}
\end{subequations}

Given $\lambda \in \mathbb{R}^\horizon$, each
subproblem~\eqref{eq:Schiever_stochastic_decompo_operational} and
\eqref{eq:Schiever_stochastic_decompo_electricity} can be solved independently.

\subsubsection{An equivalent convex electricity allocation problem}\label{Schiever_auxiliary_electricity}
The electricity allocation Problem~$[\FinalCost]$-\eqref{eq:Schiever_stochastic_decompo_electricity} obtained by decomposing Problem~\mbox{$[\FinalCost]${-}\eqref{eq:Schiever_stochastic_formulation}} is still a
challenge for Dynamic Programming as it requires a fine discretization of
  the two states~$\PPAStock$ and $\ElectricityCumul$. An approximation by a convex
optimization problem would enable the use of faster algorithms for its
resolution, like Stochastic Dual Dynamic Programming (SDDP) \cite{SDDP}, which does not rely on state discretization, and is particularly adapted to the stochastic case.

To obtain a convex approximation of
Problem~$[\FinalCost]$-\eqref{eq:Schiever_stochastic_decompo_electricity}, we use the two following
keys. First, we substitute the nonconvex final cost function $\FinalCost$ with
a proper nondecreasing convex function $\widehat{\FinalCost}$.
Second, we replace the cumulative electricity $\ElectricityCumul$
dynamics (nonlinear) as described in
Equation~\eqref{eq:cumul_dynamics} with a linear
dynamics by introducing new decisions and constraints. The new optimization
problem we consider is defined by

\begin{subequations}\label{eq:Schiever_stochastic_decompo_electricity2}
  \begin{equation}
    \begin{split}
      \widehat{\phi}^{\EL}[\widehat{\FinalCost}](\lambda) =
      \min_{
      \substack{
      \np{ 
      \va{\Electricity}^{\PPA}_\hour, 
      \va{\Electricity}^{\Grid}_{\hour+1},
      \va{\Electricity}^N_{\hour+1}, 
      \va{\Electricity}^R_{\hour+1}}_{\hour \in \HOUR}
      \\
      \np{\va{\PPAStock}_\hour,
      \va{\ElectricityCumul}_\hour
      }_{\hour \in \overline{\HOUR}}}
      }
      &\mathbb{E} \Big[\sum\limits_{\hour \in \HOUR} \InstantaneousCost_\hour^{\EL}\bp{\va{\Electricity}^{\PPA}_\hour,
        \va{\Electricity}^{\Grid}_{\hour+1},\va{\Electricity}^{\PV}_{\hour+1},\lambda_\hour} + \widehat{\FinalCost}(\va{\ElectricityCumul_\horizon})  \Big]  
    \end{split}
  \end{equation}

  subject to the following constraint, $\forall \hour \in \HOUR$
  \begin{align}
    \substack{\text{reformulation of~\eqref{eq:Schiever_stoch_electricity_constraint}} \\ \text{as linear constraints}
   }
    &\begin{cases}
    &\PPAStock_0=\overline{\Electricity^{\PPA}} \eqsepv \va{\PPAStock}_{\hour+1} = \va{\PPAStock}_\hour - \va{\Electricity}^{\PPA}_\hour  \eqfinv
    \\
    &\ElectricityCumul_0=0 \eqsepv \va{\ElectricityCumul}_{\hour+1} =\va{\ElectricityCumul}_\hour +(1-p)\va{\Electricity}_{\hour+1}^{N} -p\va{\Electricity}^{R}_{\hour+1}  \eqfinv
    \\
    &\va{\Electricity}^{\PPA}_{\hour} \leq \va{\PPAStock}_\hour \eqfinv
    \\
    &0 \leq \va{\Electricity}^{\PPA}_\hour \eqfinv
    \\
    &\va{\Electricity}^{\PPA}_\hour + \va{\Electricity}^{\Grid}_{\hour+1}+ \va{\Electricity}^{\PV}_{\hour+1} \leq \overline{\Electricity} \eqfinv
    \\
    &\underline{\Electricity}^{\Grid} \leq \va{\Electricity}^{\Grid}_{\hour+1} \eqfinv
  \end{cases}\label{eq:schiever_stochastic_initial_constraints}
  \\
  \substack{\text{additional constraints}}&\begin{cases} &0 \leq \va{\Electricity}_{\hour+1}^{N} \eqfinv 
    \\ 
    &\va{\Electricity}_{\hour+1}^{\Grid} \leq \va{\Electricity}_{\hour+1}^{N} \eqfinv 
    \\ 
    &\va{\Electricity}^{R}_{\hour+1} \leq \overline{\Electricity} \eqfinv 
    \\
    &\va{\Electricity}^{R}_{\hour+1} \leq \va{\Electricity}^{\PPA}_\hour + \va{\Electricity}^{\PV}_{\hour+1} \eqfinv
  \end{cases}\label{eq:schiever_stochastic_linearization_constraints}
  \\
  \substack{\text{nonanticipativity} \\\text{constraints}}&\begin{cases}
    &\sigma(\va{\Electricity}^{\PPA}_\hour)
    \subset \sigma\bp{\va{\Electricity}^{\PV}_{\hour '} \eqsepv \hour' \le \hour} \eqfinv
    \\ 
    &\sigma (\va{\Electricity}^{\Grid}_{\hour+1},\va{\Electricity}^{N}_{\hour+1},\va{\Electricity}^{R}_{\hour+1})
    \subset \sigma\bp{\va{\Electricity}^{\PV}_{\hour '} \eqsepv \hour' \le \hour+1} \eqfinp
  \end{cases}\label{eq:schiever_stochastic_nonanticipativity_constraints}
  \end{align}
\end{subequations}
In~\eqref{eq:schiever_stochastic_linearization_constraints},
the constraints on $\va{\Electricity}^{N}_{\hour+1}$ model the positive part of $\va{\Electricity}^{\Grid}_{\hour+1}$ in the dynamics of $\va{\ElectricityCumul}$. The constraints on $\va{\Electricity}^{R}_{\hour+1}$ model the min function in the dynamics of $\va{\ElectricityCumul}$.

In Proposition~\ref{Schiever_proposition}, we show that Problem~$[\widehat{\FinalCost}]$-\eqref{eq:Schiever_stochastic_decompo_electricity} and
Problem~$[\widehat{\FinalCost}]$-\eqref{eq:Schiever_stochastic_decompo_electricity2} when considered with the same final cost $\widehat{\FinalCost}$ are equivalent, in the sense that from a feasible solution of Problem~$[\widehat{\FinalCost}]$-\eqref{eq:Schiever_stochastic_decompo_electricity} (resp. Problem~$[\widehat{\FinalCost}]$~-\eqref{eq:Schiever_stochastic_decompo_electricity2}), we can construct a feasible solution for Problem~$[\widehat{\FinalCost}]$-\eqref{eq:Schiever_stochastic_decompo_electricity2} (resp. Problem~$[\widehat{\FinalCost}]$-\eqref{eq:Schiever_stochastic_decompo_electricity}) that yields the same value. Moreover, we give in Proposition~\ref{Schiever_proposition} conditions on the choice of $\widehat{\FinalCost}$ to obtain lower bounds on the value of Problem~\eqref{eq:Schiever_stochastic_decompo_electricity} with the original final cost ${\FinalCost}$.

\begin{proposition}\label{Schiever_proposition}
  We consider Problem~\eqref{eq:Schiever_stochastic_decompo_electricity} and
  Problem~\eqref{eq:Schiever_stochastic_decompo_electricity2}.
  \begin{enumerate}
  \item\label{proposition_item1} If 
    the final cost function $\overline{\FinalCost}$ in the definition of
    Problem~$[\overline{\FinalCost}]$-\eqref{eq:Schiever_stochastic_decompo_electricity2}
    is proper\footnote{that is $\overline{\FinalCost}: \RR \to ]-\infty,+\infty]$ and there exists $x \in \RR$ such that
      $\overline{K}(x) \ne +\infty$} and nondecreasing, then
    Problem~$[\overline{\FinalCost}]$-\eqref{eq:Schiever_stochastic_decompo_electricity2} is equivalent to
    Problem~$[\overline{\FinalCost}]$-\eqref{eq:Schiever_stochastic_decompo_electricity}.
    Moreover, if
    $\overline{\FinalCost}$ is convex, then
    Problem~$[\overline{\FinalCost}]$-\eqref{eq:Schiever_stochastic_decompo_electricity2} is a convex
    optimization problem.
  \item \label{proposition_item2}
    If $\overline{\FinalCost} \leq \FinalCost$ in the interval
    [$\underline{\ElectricityCumul},\overline{\ElectricityCumul}$], where $\underline{\ElectricityCumul}$ and $\overline{\ElectricityCumul}$ are defined
    respectively \\by $\overline{\ElectricityCumul}= \horizon (1-p)
    \overline{\Electricity}$ and $\underline{\ElectricityCumul} = -\horizon p
    \overline{\Electricity}$, then we have that
      $\phi^{\EL}[\overline{\FinalCost}] \le \phi^{\EL}[\FinalCost]$.
  \item \label{proposition_item3}
    The final cost
    $\widehat{\FinalCost}: \mathbb{R} \to \mathbb{R}$ is defined by $ \widehat{\FinalCost}(x) =
    \max(\beta_1 x, \beta_2 x) - c^s$   with $\beta_1,\beta_2$ such that
    $0 \leq \beta_1 < \beta_2 \leq \frac{c^s}{\overline{\ElectricityCumul}}$ and where $c^s$ is
    the subsidy, as described in~\S\ref{Schiever:cost_function} satisfies previous items~\ref{proposition_item1} and~\ref{proposition_item2}.
    As a consequence we have that
    $\widehat{\phi}[\widehat{\FinalCost}] \le \phi[{\FinalCost}]$
    where $\widehat{\phi}[\widehat{\FinalCost}] = \phi^{\OP} +\widehat{\phi}^{\EL}[\widehat{\FinalCost}]$.
  \end{enumerate}
\end{proposition}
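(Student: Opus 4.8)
The plan is to prove the three items in the order stated, since item~\ref{proposition_item3} is a short corollary of the first two. Throughout I would argue pathwise (i.e.\ for $\mathds{P}$-almost every realization) and then take expectations, which is legitimate because the objectives of Problems~\eqref{eq:Schiever_stochastic_decompo_electricity} and~\eqref{eq:Schiever_stochastic_decompo_electricity2} are expectations of sums whose terms are compared pointwise, and because all constraints are almost sure.

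For item~\ref{proposition_item1}, I would prove equivalence by exhibiting value-preserving maps between the feasible sets in both directions. \emph{From~\eqref{eq:Schiever_stochastic_decompo_electricity} to~\eqref{eq:Schiever_stochastic_decompo_electricity2}:} given a feasible point, set $\va{\Electricity}^{N}_{\hour+1}:=(\va{\Electricity}^{\Grid}_{\hour+1})_+$ and $\va{\Electricity}^{R}_{\hour+1}:=\overline{\Electricity}\wedge(\va{\Electricity}^{\PPA}_\hour+\va{\Electricity}^{\PV}_{\hour+1})$; using $\va{\Electricity}^{\PPA}_\hour\ge0$ and $\va{\Electricity}^{\PV}_{\hour+1}\ge0$, the additional constraints~\eqref{eq:schiever_stochastic_linearization_constraints} hold, the new variables are adapted as required by~\eqref{eq:schiever_stochastic_nonanticipativity_constraints}, and the linear dynamics of $\va{\ElectricityCumul}$ reproduces the nonlinear one of~\eqref{eq:cumul_dynamics} exactly, so $\va{\ElectricityCumul}_\horizon$ and the objective are unchanged. \emph{From~\eqref{eq:Schiever_stochastic_decompo_electricity2} to~\eqref{eq:Schiever_stochastic_decompo_electricity}:} keep $(\va{\Electricity}^{\PPA},\va{\Electricity}^{\Grid},\va{\PPAStock})$ and recompute $\va{\ElectricityCumul}$ via~\eqref{eq:cumul_dynamics}; constraints~\eqref{eq:schiever_stochastic_linearization_constraints} give $\va{\Electricity}^{N}_{\hour+1}\ge(\va{\Electricity}^{\Grid}_{\hour+1})_+$ and $\va{\Electricity}^{R}_{\hour+1}\le\overline{\Electricity}\wedge(\va{\Electricity}^{\PPA}_\hour+\va{\Electricity}^{\PV}_{\hour+1})$, so since $0\le p\le1$ every increment of the recomputed trajectory is no larger than that of the linear one, whence by induction from $\va{\ElectricityCumul}_0=0$ the recomputed $\va{\ElectricityCumul}_\horizon$ is $\le$ the original one; as $\overline{\FinalCost}$ is nondecreasing the final cost does not increase while the instantaneous costs are untouched. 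Combining the two directions yields equality of the optimal values and the claimed transfer of solutions. If moreover $\overline{\FinalCost}$ is convex, Problem~\eqref{eq:Schiever_stochastic_decompo_electricity2} is convex: $\va{\Electricity}^{\Grid}_{\hour+1}\mapsto(\va{\Electricity}^{\Grid}_{\hour+1})_+$ is convex with nonnegative weight $c^{G}_\hour$, the other cost terms are affine, $\va{\ElectricityCumul}_\horizon$ is an affine function of the decisions because its dynamics is now linear, and all constraints~\eqref{eq:schiever_stochastic_initial_constraints}--\eqref{eq:schiever_stochastic_nonanticipativity_constraints} are affine (equalities, inequalities, measurability).

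For item~\ref{proposition_item2}, Problems~$[\overline{\FinalCost}]$- and $[\FinalCost]$-\eqref{eq:Schiever_stochastic_decompo_electricity} share the same feasible set, so it suffices to show that every feasible point satisfies $\va{\ElectricityCumul}_\horizon\in[\underline{\ElectricityCumul},\overline{\ElectricityCumul}]$: then $\overline{\FinalCost}(\va{\ElectricityCumul}_\horizon)\le\FinalCost(\va{\ElectricityCumul}_\horizon)$ pointwise, the $[\overline{\FinalCost}]$-objective is everywhere $\le$ the $[\FinalCost]$-objective on that common set, and $\phi^{\EL}[\overline{\FinalCost}]\le\phi^{\EL}[\FinalCost]$ follows by taking expectations and minimizing. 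From~\eqref{eq:schiever_total_elec_bounded} and $\va{\Electricity}^{\PPA}_\hour,\va{\Electricity}^{\PV}_{\hour+1}\ge0$ we get $0\le(\va{\Electricity}^{\Grid}_{\hour+1})_+\le\overline{\Electricity}$, and since also $0\le\overline{\Electricity}\wedge(\va{\Electricity}^{\PPA}_\hour+\va{\Electricity}^{\PV}_{\hour+1})\le\overline{\Electricity}$, the increment $\va{\ElectricityCumul}_{\hour+1}-\va{\ElectricityCumul}_\hour$ in~\eqref{eq:cumul_dynamics} lies in $[-p\,\overline{\Electricity},(1-p)\overline{\Electricity}]$. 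Summing the $\horizon$ increments from $\va{\ElectricityCumul}_0=0$ gives $-\horizon p\,\overline{\Electricity}\le\va{\ElectricityCumul}_\horizon\le\horizon(1-p)\overline{\Electricity}$, i.e.\ $\va{\ElectricityCumul}_\horizon\in[\underline{\ElectricityCumul},\overline{\ElectricityCumul}]$.

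For item~\ref{proposition_item3}, note first that $\widehat{\FinalCost}(x)=\max(\beta_1x,\beta_2x)-c^s$ is finite everywhere (hence proper), is a maximum of affine functions (hence convex), and is nondecreasing because $0\le\beta_1<\beta_2$ makes both slopes nonnegative; thus item~\ref{proposition_item1} applies and Problem~$[\widehat{\FinalCost}]$-\eqref{eq:Schiever_stochastic_decompo_electricity2} is a convex problem equivalent to Problem~$[\widehat{\FinalCost}]$-\eqref{eq:Schiever_stochastic_decompo_electricity}. Next I would check the hypothesis of item~\ref{proposition_item2}, recalling $\FinalCost=-c^s\findi{\RR_-}$ from~\eqref{eq:heavyside}: for $x\le0$, $\max(\beta_1x,\beta_2x)=\beta_1x\le0$ so $\widehat{\FinalCost}(x)\le-c^s=\FinalCost(x)$; for $0<x\le\overline{\ElectricityCumul}$, $\max(\beta_1x,\beta_2x)=\beta_2x\le\beta_2\overline{\ElectricityCumul}\le c^s$ by the assumption $\beta_2\le c^s/\overline{\ElectricityCumul}$, so $\widehat{\FinalCost}(x)\le0=\FinalCost(x)$; hence $\widehat{\FinalCost}\le\FinalCost$ on $[\underline{\ElectricityCumul},\overline{\ElectricityCumul}]$. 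Chaining, $\widehat{\phi}^{\EL}[\widehat{\FinalCost}]=\phi^{\EL}[\widehat{\FinalCost}]\le\phi^{\EL}[\FinalCost]$ — the equality by item~\ref{proposition_item1}, the inequality by item~\ref{proposition_item2} — and adding the common operational term $\phi^{\OP}$ gives $\widehat{\phi}[\widehat{\FinalCost}]=\phi^{\OP}+\widehat{\phi}^{\EL}[\widehat{\FinalCost}]\le\phi^{\OP}+\phi^{\EL}[\FinalCost]=\phi[\FinalCost]$. The step needing care is the converse direction in item~\ref{proposition_item1}: one must show that relaxing $(\cdot)_+$ and the $\min$ into the inequality constraints~\eqref{eq:schiever_stochastic_linearization_constraints} can only enlarge $\va{\ElectricityCumul}_\horizon$, and then invoke monotonicity of $\overline{\FinalCost}$; the accompanying adaptedness and sign ($p\le1$) verifications are routine, and everything else is elementary bookkeeping.
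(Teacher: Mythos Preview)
Your proof is correct and follows essentially the same route as the paper's: the same forward/backward maps between the feasible sets of Problems~\eqref{eq:Schiever_stochastic_decompo_electricity} and~\eqref{eq:Schiever_stochastic_decompo_electricity2} (with $\va{\Electricity}^{N}_{\hour+1}=(\va{\Electricity}^{\Grid}_{\hour+1})_+$, $\va{\Electricity}^{R}_{\hour+1}=\overline{\Electricity}\wedge(\va{\Electricity}^{\PPA}_\hour+\va{\Electricity}^{\PV}_{\hour+1})$ in one direction and the recomputed $\va{\ElectricityCumul}$ in the other), the same monotonicity argument via $\overline{\FinalCost}$ nondecreasing, the same increment bounds for item~\ref{proposition_item2}, and the same two-case check $x\le 0$ versus $0<x\le\overline{\ElectricityCumul}$ for item~\ref{proposition_item3}. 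The only cosmetic difference is that the paper spells out the equality of optimal values via $\xi$-optimal solutions, whereas you phrase it directly as ``the objective does not increase under the map''; the content is the same.
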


\begin{proof}
  See Appendix~\ref{Schiever_appendix_lemma} for the proof of Proposition~\ref{Schiever_proposition}.
\end{proof}

\subsubsection{Maximizing the new additive function $\widehat{\phi}[\widehat{\FinalCost}] = \phi^{\OP} + \widehat{\phi}^{\EL}[\widehat{\FinalCost}]$}\label{maximizing_phi_lin}

In what follows, we assume that $\widehat{\FinalCost}$ satisfies the assumptions of item~\ref{proposition_item3} of Proposition~\ref{Schiever_proposition}, and thus, for any $\lambda \in \mathbb{R}^\horizon$,
$\widehat{\phi}[\widehat{\FinalCost}](\lambda)$ gives a lower bound of $\mathrm{val}(\mathcal{P}[K])$.
In order to obtain the best lower bound, 
we numerically maximize the function $\widehat{\phi}[\widehat{\FinalCost}]$
using an iterative gradient-like\footnote{that is a substitute of the gradient} based algorithm whose steps are now detailed.

\begin{description}
\item[Step 1:] {\bf Initialization of the Lagrange multiplier $\lambda^0$}

  In order to choose a good initial value for the Lagrange multiplier, we use a deterministic idealized problem (convex optimization problem) whose optimal solution
  satisfies certain conditions and for which we are able to find a lower bound
  for $\lambda$. This is done by applying Lemma~\ref{lemma:schiever_lemma_multiplier} for all $\hour \in \HOUR$, which gives us the following lower bounds
  \begin{equation}\label{eq:Schiever_lambda_0} \lambda^0_\hour =
    pc^{\Grid}_\hour + (1-p)c^{\PPA} \eqsepv \forall \hour \in \HOUR \eqfinp
  \end{equation}
  Note that $\lambda^{0}$ only depends on the parameters of the electricity allocation
  Problem~\eqref{eq:Schiever_stochastic_decompo_electricity}.  We use these lower bounds in our numerical experiments as a
  starting point to maximize the dual value function
  $\widehat{\phi}[\widehat{\FinalCost}]$. This initialization gives good results as
  displayed in Figure~\ref{fig:Schiever_dual_function_stochastic}.

\item[Step 2:] {\bf Gradient-like based maximization of the dual function $\widehat{\phi}[\widehat{\FinalCost}]$}
  
  Second, at each iteration $k$ of the algorithm, the gradient-like of the
  function $\widehat{\phi}[\widehat{\FinalCost}]$ at point $\lambda^k$ is computed using Equation~\eqref{eq:Schiever_stochastic_decomposition_gradient}. For that purpose, we need to compute
  the optimal decisions of the operational
  Problem~\eqref{eq:Schiever_stochastic_decompo_operational} and the electricity allocation
  Problem~\eqref{eq:Schiever_stochastic_decompo_electricity}, which is done  as
  follows.

  \begin{description}
  \item[Step 2.1:] {\bf Solving the operational problem $\phi^{\OP}$}

    The operational Problem~\eqref{eq:Schiever_stochastic_decompo_operational} is solved by Stochastic Dynamic Programming with the pair $(\Stock,\Mode)$ composed of the stock of hydrogen and the mode of the electrolyser as state variables. The Bellman value functions~\cite{Bellman} is given by the following induction. For all $\hour\in\HOUR,$ for all $\Stock_\hour,\Mode_\hour$
    \begin{align}\label{eq:Schiever_stoch_decompo_op_dp}
      \Value^{\OP,\lambda}_\hour(\Stock_\hour,\Mode_\hour)
      = 
      \min\limits_{\np{ 
      \TurnElectrolyser_\hour,
      \load_\hour,
      \Hydrogen^{\InDemand}_\hour
      }} 
      &\mathbb{E}_{\va{\Demand}_{\hour+1}} \Big[c^d  (\va{\Demand}_{\hour+1} - \Hydrogen^{\InDemand}_\hour )_{+} + \lambda_\hour(\Electricity^{\Electrolyser}_\hour +\Electricity^{\Compressor}_\hour)  \nonumber 
      \\
      &\hspace{1.2cm} + \Value^{\OP,\lambda}_{\hour+1}\big(\va{\Stock}_{\hour+1},\Mode_{\hour+1}\big) \Big]
      \\ 
      &\text{s.t.~\eqref{eq:Schiever_stoch_operational_constraint}} \eqfinv\nonumber
    \end{align}
    
   where the Bellman value function at time $\hour = \horizon$ is null. When the random variables $(\va{\Demand}_{\hour+1})_{\hour \in \HOUR}$  are stagewise independent, Dynamic Programming provides an optimal solution.
  \item[Step 2.2:] {\bf Solving the electricity allocation problem $\phi^{\EL}[\widehat{\FinalCost}]$}

    While Stochastic Dynamic Programming is applicable to the problem, its
    practical implementation is computationally intensive due to the need for
    precise discretization of the states $\PPAStock$ and
    $\ElectricityCumul$. Alternatively, Stochastic Dual Dynamic Programming
    (SDDP), leveraging the convex nature of the problem, offers a promising
    alternative way of obtaining a solution. Note that using the final cost $\widehat{\FinalCost}$ defined in
    Item~\ref{proposition_item3} of Proposition~\ref{Schiever_proposition} is
    preferred when using SDDP, given its polyhedral nature.

    The Bellman value functions associated with the electricity allocation
    Problem~\eqref{eq:Schiever_stochastic_decompo_electricity2} are given by the following induction.
    
    At time $\hour = \horizon$, we have $ \Value^{\EL,\lambda}_\horizon(\PPAStock_\horizon,\ElectricityCumul_\horizon) = \widehat{\FinalCost}(\ElectricityCumul_\horizon)$ for all $\PPAStock_\horizon,\ElectricityCumul_\horizon$

    and for all $\hour\in\HOUR$, for all $\PPAStock_\hour,\ElectricityCumul_\hour$
    \begin{align}
    \Value^{\EL,\lambda}_\hour(\PPAStock_\hour,\ElectricityCumul_\hour)
      = 
      \min\limits_{\Electricity^{\PPA}_\hour } \:
      & \mathbb{E}_{\va{\Electricity}^{\PV}_{\hour+1}} \big[\min\limits_{\np{ 
        \va{\Electricity}^{\Grid}_{\hour+1}, 
        \va{\Electricity}^{N}_{\hour+1}, 
        \va{\Electricity}^{R}_{\hour+1}
        }}  \InstantaneousCost_\hour^{\EL}\bp{\Electricity^{\PPA}_\hour,
        \va{\Electricity}^{\Grid}_{\hour+1},\va{\Electricity}^{\PV}_{\hour+1},\lambda_\hour} \nonumber
      \\
      &\hspace{1.3cm}+ \Value^{\EL,\lambda}_{\hour+1}\big(\PPAStock_{\hour+1},\va{\ElectricityCumul}_{\hour+1}  \big)  \big] \label{eq:Schiever_decompo_elec_stoch_dp}
      \\
      &\hspace{1cm}\text{s.t.~\eqref{eq:schiever_stochastic_initial_constraints}, \eqref{eq:schiever_stochastic_linearization_constraints}} \nonumber \eqfinp 
    \end{align}

Stochastic Dual Dynamic Programming provides lower bounds $(\underline{\Value}^{E,\lambda}_\hour)_{\hour \in \HOUR}$ for the Bellman functions given by the Equations~\eqref{eq:Schiever_decompo_elec_stoch_dp}.

\item[Step 2.3:] {\bf Computation of the gradient of the function $\widehat{\phi}[\widehat{\FinalCost}]$}
  
    If the function $\widehat{\phi}[\widehat{\FinalCost}]$ was differentiable, we would obtain that
    \begin{equation}\label{eq:Schiever_stochastic_decomposition_gradient}
     \frac{\partial \widehat{\phi}[\widehat{\FinalCost}]}{\partial \lambda_\hour}(\lambda)= \widehat{\nabla}_{\hour}=  \mathbb{E} \Big[
        \va{\Electricity}^{\Electrolyser}_\hour
        +\va{\Electricity}^{\Compressor}_\hour \Big]  +\mathbb{E} \Big[
        -\va{\Electricity}^{\PPA}_\hour - \va{\Electricity}^{\Grid}_{\hour+1} -
        \va{\Electricity}^{\PV}_{\hour+1} \Big] \eqfinv
    \end{equation}
    for all $\hour \in \HOUR$, where $\va{\Electricity}^{\cdot}_\hour$ is the optimal value of the
    control $\va{\Electricity}^{\cdot}_\hour$ of Problems~\eqref{eq:Schiever_stochastic_decompo_operational} and~\eqref{eq:Schiever_stochastic_decompo_electricity} that depend on $\lambda$. 
    \iffalse
    As Problem~\eqref{eq:Schiever_stochastic_decompo_operational} contains discrete variables, Danskin's theorem~\cite{danskin}
    does not apply, . Consequently, the right hand side of
    Equation~\eqref{eq:Schiever_stochastic_decomposition_gradient} cannot be
    interpreted as the true gradient of the function $\widehat{\phi}[\widehat{\FinalCost}]$.
    \fi
    Here, as the function $\widehat{\phi}[\widehat{\FinalCost}]$ is not differentiable (presence of integer controls in Problem~\eqref{eq:Schiever_stochastic_decompo_operational}), we use Equation~\eqref{eq:Schiever_stochastic_decomposition_gradient} as
    a gradient-like heuristic to update the multiplier $\lambda$ when maximizing the function $\widehat{\phi}[\widehat{\FinalCost}]$.
    The gradient-like is defined by the sum of two expectations. The second one, $\mathbb{E} \big[
        -\va{\Electricity}^{\PPA}_\hour - \va{\Electricity}^{\Grid}_{\hour+1} -
        \va{\Electricity}^{\PV}_{\hour+1} \big]$, is approximated using a
    Monte-Carlo method while the first one, $\mathbb{E}\big[
        \va{\Electricity}^{\Electrolyser}_\hour
        +\va{\Electricity}^{\Compressor}_\hour \big]$, is more efficiently computed using the discrete probability law of the state driven by the optimal policy (Fokker-Planck equation).
  \end{description}
\end{description}

The algorithm used to maximize $\widehat{\phi}[\widehat{\FinalCost}]$ using
Stochastic Dynamic Programming for the operational Problem~\eqref{eq:Schiever_stochastic_decompo_operational} and Stochastic Dual
Dynamic Programming for the electricity allocation Problem~\eqref{eq:Schiever_stochastic_decompo_electricity} is described in
Algorithm~\ref{alg:Schiever_stochastic_decompo_algo}.
\begin{algorithm}
\caption{Maximizing the function $\widehat{\phi}[\widehat{\FinalCost}]$ by gradient ascent}
\label{alg:Schiever_stochastic_decompo_algo}
\begin{algorithmic}
\Require $\lambda^0, nb\_iterations, \gamma, \Stock_0, \Mode_0, \PPAStock_0, \ElectricityCumul_0$

\hspace{-0.85cm}$k \gets 0$

\hspace{-0.85cm}\text{Initialize $\lambda^0$ using Equation~\eqref{eq:Schiever_lambda_0}}

\While{$k < nb\_iterations$}

\text{Run SDP on operational problem to obtain $(\Value^{\OP,\lambda^{k-1}}_\hour)_{\hour \in \HOUR}$ \eqref{eq:Schiever_stoch_decompo_op_dp}}

\text{Run SDDP on the electricity problem to obtain $(\underline{\Value}^{\EL,\lambda^{k-1}}_\hour)_{\hour \in \HOUR}$ \eqref{eq:Schiever_decompo_elec_stoch_dp}}

 $\lambda^{k} \gets \lambda^{k-1} + \gamma (\widehat{\nabla}_\hour)_{\hour \in \HOUR}$ (see~\eqref{eq:Schiever_stochastic_decomposition_gradient})

$k \gets k+1$
\EndWhile

\hspace{-0.85cm}\Return $\lambda^{\text{nb\_iterations}}$
\end{algorithmic}
\end{algorithm}

\subsubsection{Producing an admissible policy} 
A (state) policy is a mapping from states to controls that determines the action to take at a given time in a given state.

For a fixed deterministic multiplier $\lambda = \sequence{\lambda_\hour}{\hour \in \HOUR}$, we obtain a feasible policy $\pi^{\lambda} = \sequence{\pi^{\lambda}_\hour}{\hour \in \HOUR}$ for Problem~\eqref{eq:Schiever_stochastic_formulation} by considering the following one step optimization problem which uses the sum of the computed Bellman value functions~\eqref{eq:Schiever_stoch_decompo_op_dp} for the operational problem and the lower bounds of the Bellman value functions~\eqref{eq:Schiever_decompo_elec_stoch_dp} for the electricity allocation problem
\begin{align}\label{eq:admissible-policy-stoch}
  \pi^{\lambda}_\hour(\Stock_\hour,\Mode_\hour,\PPAStock_\hour,\ElectricityCumul_\hour)
  &= \argmin\limits_{\np{\Electricity^{\PPA}_\hour,\TurnElectrolyser_\hour,\load_\hour,
  \Hydrogen^{\InDemand}_\hour}}  \:     
  \mathbb{E}  \Big[
  \min_{\va{\Electricity}^{\Grid}_{\hour+1}}\ \InstantaneousCost_\hour\bp{\Electricity^{\PPA}_\hour,
    \va{\Electricity}^{\Grid}_{\hour+1}, \va{\Demand}_{\hour+1},
    \Hydrogen^{\InDemand}_\hour}
    \nonumber \\
  &\hspace{1cm} \underbrace{\Value^{\OP,\lambda}_{\hour+1}(\va{\Stock}_{\hour+1}, \Mode_{\hour+1}) + \underline{\Value}^{\EL,\lambda}_{\hour+1}(\PPAStock_{\hour+1}, \va{\ElectricityCumul}_{\hour+1})}_{\text{surrogate additive value function}} \Big]  
  \\
  &\hspace{1cm}\text{s.t.~\eqref{eq:Schiever_stoch_operational_constraint}, \eqref{eq:Schiever_stoch_electricity_constraint}, \eqref{eq:Schiever_stoch_coupling_constraint}} \nonumber  \eqfinp
\end{align}
 We denote by $\mathrm{val}(\mathcal{P_{\pi^{\lambda}}}[\FinalCost])$ the total cost of Problem~\eqref{eq:Schiever_stochastic_formulation} when applying the feasible policy given by Equation~\eqref{eq:admissible-policy-stoch}.

\section{Numerical case study results}
\label{Schiever:Numerical_results}

In this section, we present numerical results obtained for Problem~\eqref{eq:Schiever_stochastic_formulation} described in~\S\ref{Schiever_problem_formulation}.

\subsection{Case study data}\label{shriver-data}
We present the different data needed to formulate Problem~\eqref{eq:Schiever_stochastic_formulation}. Some of the data were already presented in Table~\ref{table:cost-variables}. The optimization problem is formulated at hourly step over one week, thus we have $\HOUR=\{0,\ldots,167\}$.

\begin{itemize}
\item The electrolyser has the following characteristics.
  \begin{itemize} 
  \item The electrolyser function $\ELECTROLYSERFUNCTION$ (used in Equation~\eqref{eq:electrolyser_electricity}) is given in Figure~\ref{fig:schiever_load}.
    
  \item Table~\ref{table:Schiever_transition_time_electrolyser}
    gives the numerical values of the function $\MaximalProduction$ (see Equation~\eqref{eq:hydrogen_production}).
    
  \item The maximal hydrogen production $\MaximalProductionHour$ (used in Equation~\eqref{eq:hydrogen_production}) is equal to \numprint[kg/h]{23}. 
  \item The consumption on \IDLE\ mode $e^{\IDLE}$ (used in Equation~\eqref{eq:electrolyser_electricity}) is equal to \numprint[kWh]{3} per hour.
  \end{itemize}

  \begin{figure}[htpp]
    \centering
    \begin{subfigure}[h]{0.3\textwidth}
      \centering
      \mbox{\includegraphics[width=1.0\textwidth]{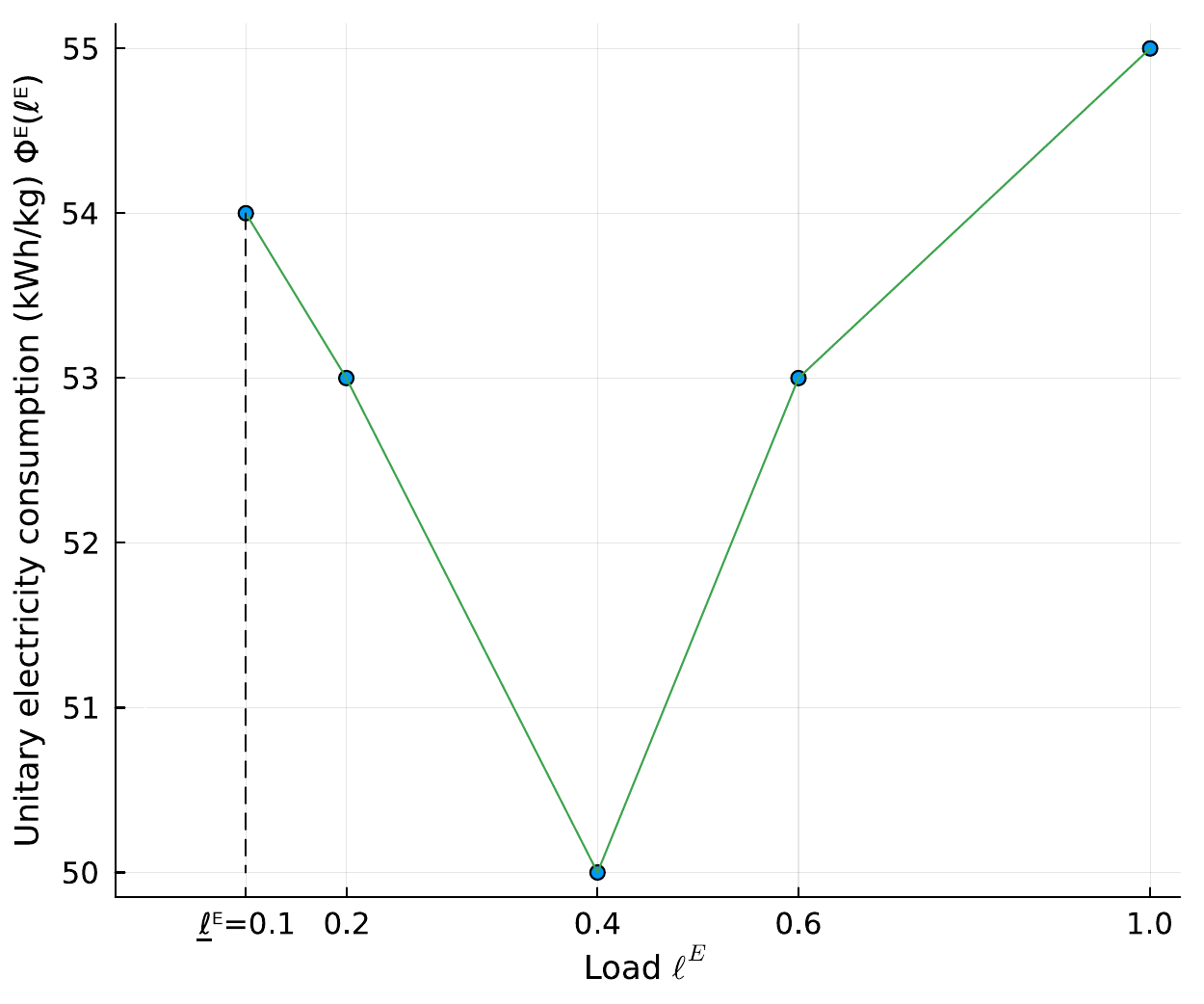}}%
      \caption{\label{fig:schiever_load}}
    \end{subfigure}\hspace{0.5cm}%
    \begin{subfigure}[t]{0.3\textwidth}
      \centering
      \mbox{
    \begin{tabular}{|c|c|c|c|}
      \hline 
      $\Mode | \TurnElectrolyser$ & \COLD & \IDLE & \START \\ \hline 
      \rule{0pt}{2.5ex}  
      \COLD& 1& $\frac{5}{6}$ & $\frac{99}{120}$  \\ [0.6ex] \hline
      \rule{0pt}{2.5ex}    
      \IDLE& $\frac{119}{120}$ & 1 & $\frac{299}{300}$ \\[0.6ex] \hline 
      \rule{0pt}{2.5ex}   
      \START& $\frac{119}{120}$ & $\frac{119}{120}$ & 1 \\[0.6ex] \hline 
    \end{tabular}}
    \caption{\label{table:Schiever_transition_time_electrolyser}}
  \end{subfigure}
  \caption{In~\eqref{fig:schiever_load}, we draw the evolution of electricity consumption as a function of the load of the electrolyser
    and in~\eqref{table:Schiever_transition_time_electrolyser} the values of the function $\mu$ given $\Mode$ and $\TurnElectrolyser$}
\end{figure}

  \iffalse
    \begin{figure}[htpp]
      \centering
      \fbox{\includegraphics[width=0.60\textwidth]{}}
      \caption{Remaining time when  transitioning from a mode to another (function $\mu$)}
      \label{Schiever:transition_modes}
    \end{figure}
  \fi
  
\item The compressor consumption  $e^{\Compressor}$ (used in Equation~\eqref{eq:compressor_electricity}) is equal to \numprint[kWh/kg]{6}.
\item The minimal capacity of the storage, $\underline{\Stock}$, is \numprint[kg]{25} and its maximal capacity, $\overline{\Stock}$, is~\numprint[kg]{750} (see Equation~\eqref{eq:stock_bounded}).

\item The electricity sources have the following characteristics.
  \begin{itemize} 
  \item The stock of PPA, $\overline{\Electricity^{\PPA}}$ (used in Equation~\eqref{eq:ppa}), is equal to \numprint[kWh]{41650}.
  \item The unitary price of PPA (used in Equation~\eqref{eq:Shriver_deterministic_InstantaneousCost1}) is equal to \numprint{0.075}\euro/kWh.
  \item The subsidy $c^s$ is $\numprint{5e6}$\euro, and the ratio of grid electricity $p$ is \numprint{0.2} (used in Equation~\eqref{eq:Schiever_final_cost}).
  \item Figure~\ref{fig:Grid_cost} shows the time evolution of the grid cost.
  \item For a given $\hour \in \HOUR$, the PV energy produced during the time
    interval $[\hour,\hour+1[$, $\va{\Electricity}^{\PV}_{\hour+1}$, is a random
    variable with a discrete probability distribution displayed in
    Table~\ref{tab:PV_discrete_distribution}, where the set of parameters
    $(\mu^{pv}_{\hour+1})_{\hour \in \HOUR}$ are given in
    Figure~\ref{fig:pv_production}.
    \begin{figure}
      \centering
      \begin{subfigure}[b]{0.3\textwidth}
        \centering
        \includegraphics[width=\textwidth]{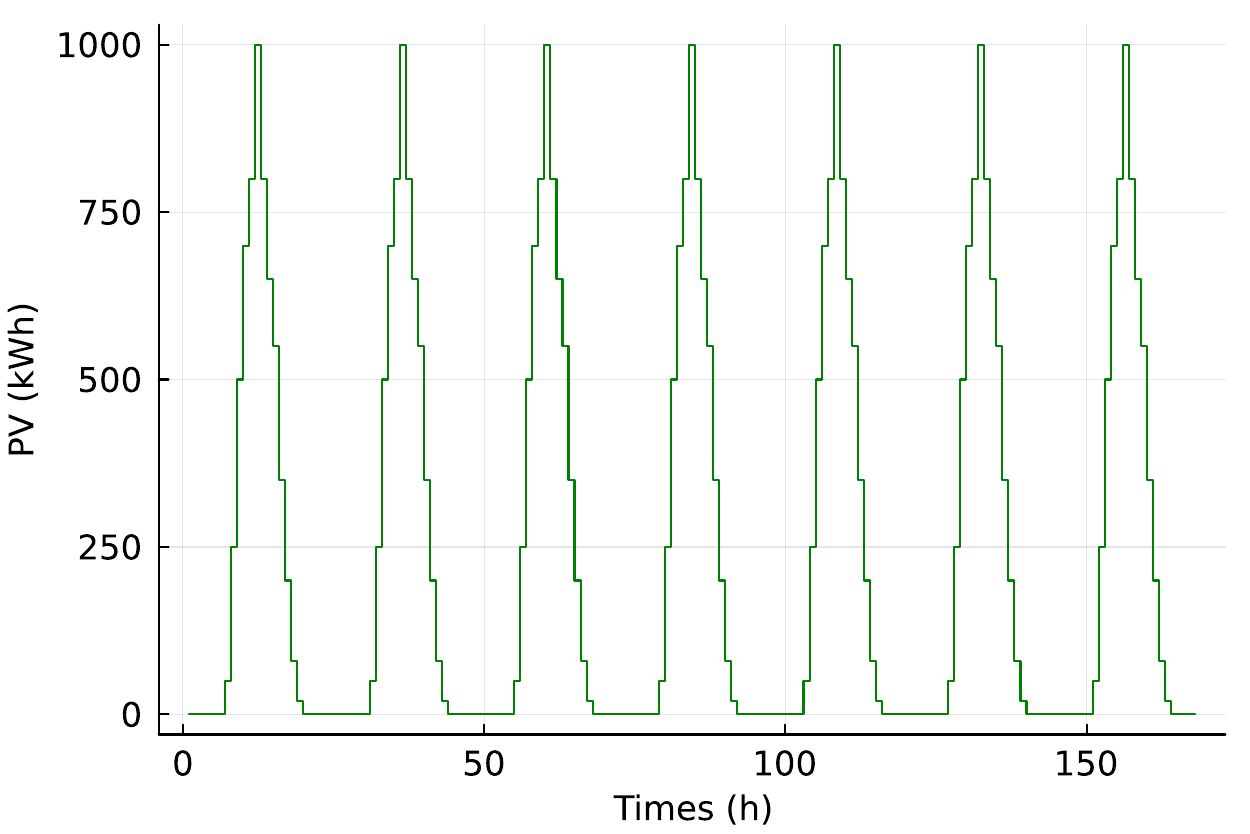}
        \caption{}
        \label{fig:pv_production}
      \end{subfigure}%
      \begin{subfigure}[b]{0.3\textwidth}
        \centering
        \includegraphics[width=\textwidth]{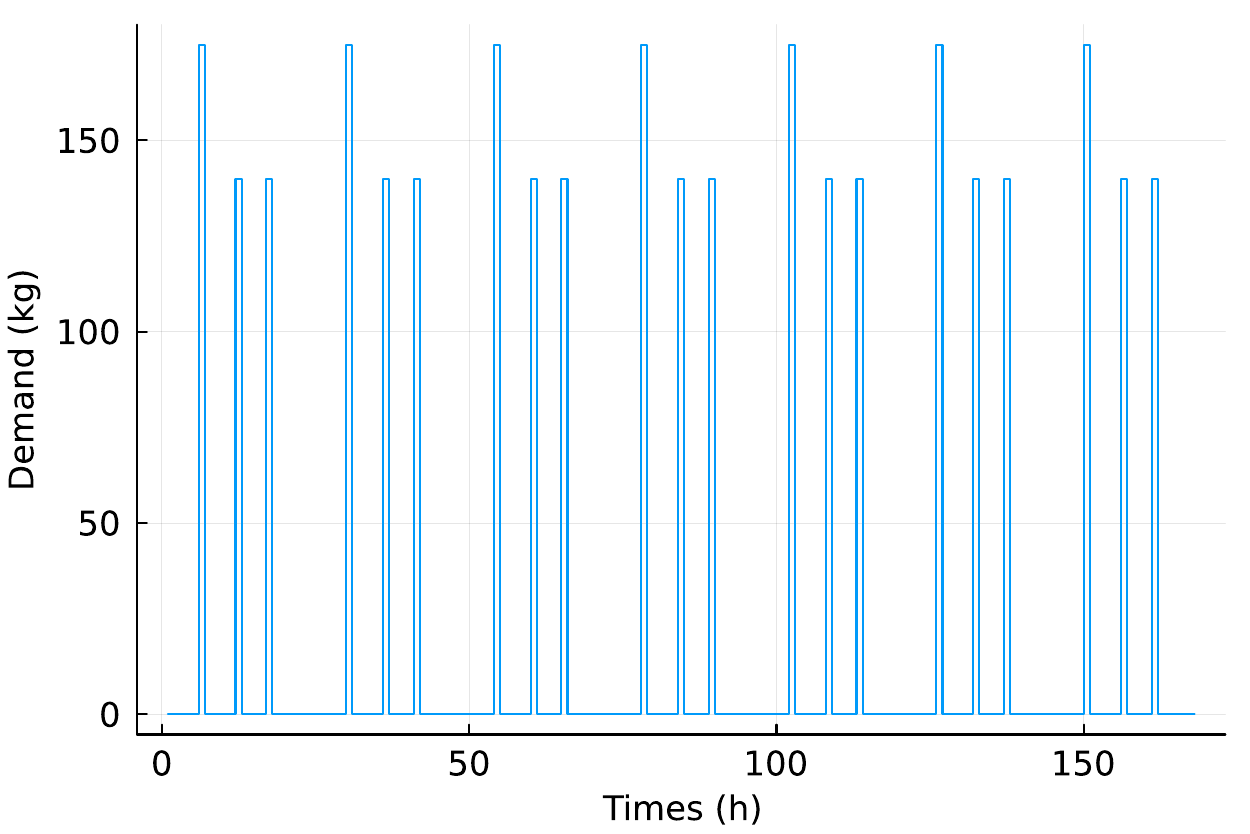}
        \caption{}
        \label{fig:demand}
      \end{subfigure}
      \begin{subfigure}[b]{0.3\textwidth}
        \centering
        \includegraphics[width=\textwidth]{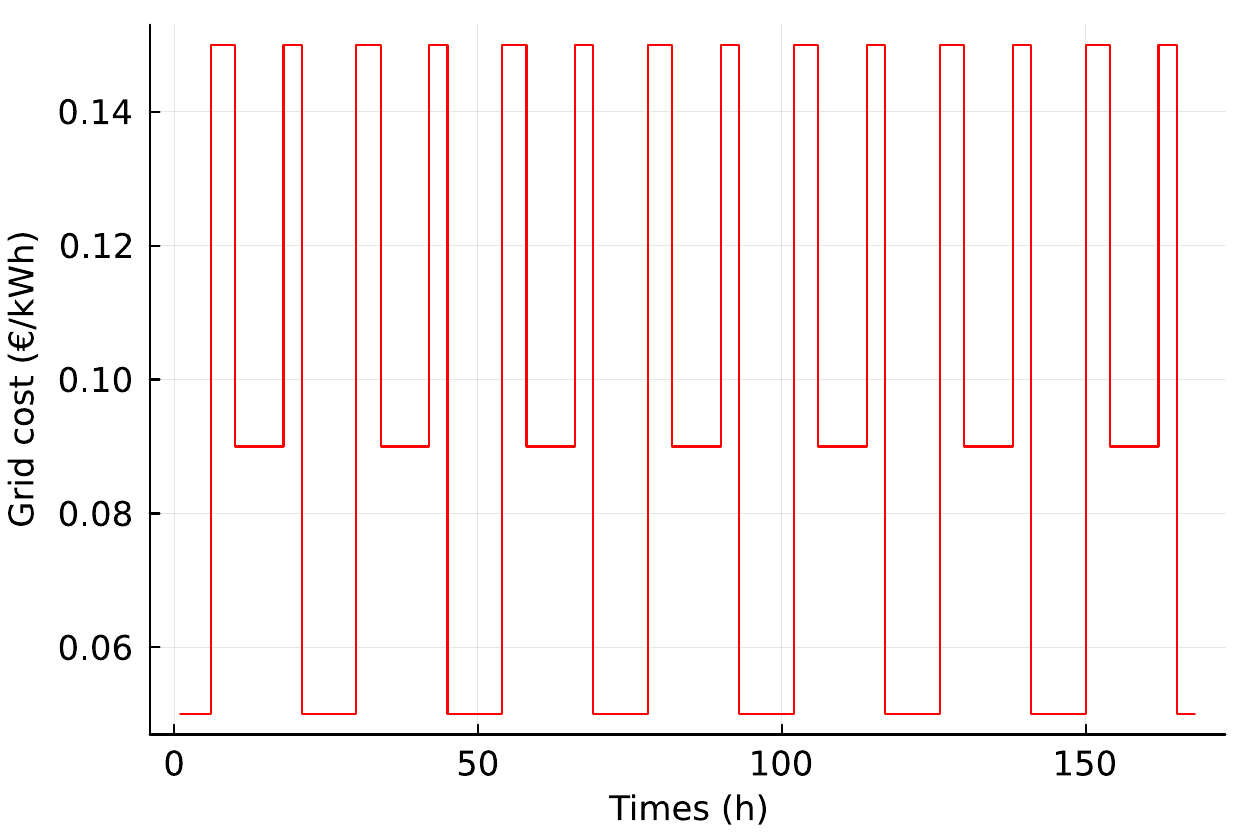}
        \caption{}
        \label{fig:Grid_cost}
      \end{subfigure}
      \caption{Values of the parameters $(\mu^{pv}_{\hour+1})_{\hour \in \HOUR}$ (a), $(\mu^{d}_{\hour+1})_{\hour \in \HOUR}$~(b) and the grid cost $(c^{\Grid}_{\hour+1})_{\hour \in \HOUR}$~(c)}
      \label{fig:grid_pv_evolution}
    \end{figure}

    % \rl{mettre les references dans le futur tableau}

    \begin{table}[htbp]
      \centering
      \caption{Probability distribution of $\va{\Electricity}_{\hour+1}^{\PV}$}
      \begin{tabular}{*{13}{c}}
        \hline
        Outcome & \numprint{0.8}$\mu^{pv}_{\hour+1}$ & \numprint{0.9}$\mu^{pv}_{\hour+1}$ & $\mu^{pv}_{\hour+1}$ & \numprint{1.1}$\mu^{pv}_{\hour+1}$ & \numprint{1.2}$\mu^{pv}_{\hour+1}$ \\
        \hline
        Probability & $\frac{1}{5}$ & $\frac{1}{5}$ & $\frac{1}{5}$ & $\frac{1}{5}$ & $\frac{1}{5}$ \\
        \hline
      \end{tabular}
      \label{tab:PV_discrete_distribution}
    \end{table}
  \end{itemize}
  \item The hydrogen demand has the following characteristics.
    \begin{itemize}
    \item For a given $\hour \in \HOUR$, the hydrogen demand during the time
      interval $[\hour,\hour+1[$, $\va{\Demand}_{\hour+1}$, is a random variable
      with a discrete probability distribution displayed in
      Table~\ref{tab:demand_discrete_distribution}, where the set of parameters
      $(\mu^{d}_{\hour+1})_{\hour \in \HOUR}$ are given in Figure~\ref{fig:demand}.

    \item The dissatisfaction cost $c^d$ is equal to \numprint{5000}\euro/kg (see Equation~\eqref{eq:Shriver_deterministic_InstantaneousCost1}).
      \begin{table}[htbp]
        \centering
        \caption{Probability distribution of $\va{\Demand}_{\hour+1}$}
        \begin{tabular}{*{13}{c}}
          \hline
          Outcome & \numprint{0.8}$\mu^{d}_{\hour+1}$ & \numprint{0.9}$\mu^{d}_{\hour+1}$ & $\mu^{d}_{\hour+1}$ & \numprint{1.1}$\mu^{d}_{\hour+1}$ & \numprint{1.2}$\mu^{d}_{\hour+1}$ \\
          \hline
          Probability & $\frac{1}{5}$ & $\frac{1}{5}$ & $\frac{1}{5}$ & $\frac{1}{5}$ & $\frac{1}{5}$ \\
          \hline
        \end{tabular}
        \label{tab:demand_discrete_distribution}
      \end{table}
    \end{itemize}
   
\end{itemize}

\subsection{Implementation of Algorithm~\ref{alg:Schiever_stochastic_decompo_algo}}
Algorithm~\ref{alg:Schiever_stochastic_decompo_algo} is implemented in Julia
1.9.2. For the SDP component, we use our own implementation developed in Julia.
As for the SDDP component, we employ \texttt{SDDP.jl} as the SDDP
solver~\cite{SDDP.jl}, with \texttt{SDDP.jl} utilizing JuMP~\cite{JuMP} as the modeler
and Gurobi 11.0~\cite{gurobi} as the LP solver. All computations were performed on a
Linux system equipped with 4-processor Intel Xeon E5{-}2667, 3.30GHz, with 192 GB of RAM.

\subsection{Numerical results of Algorithm~\ref{alg:Schiever_stochastic_decompo_algo}}\label{shriver-result}
The value of the parameter $\beta_1$ is set to \numprint{0}. The value of parameter $\beta_2$ is set to \numprint{26.5} to penalize grid consumption over 20\% of the total electricity consumption. The decisions $\load_\hour$ and $\Hydrogen^{\InDemand}_\hour$ and the state $\Stock_\hour$ are discretized for all $\hour \in \HOUR$ for solving the operational Problem~\eqref{eq:Schiever_stochastic_decompo_operational} using SDP. In Table~\ref{Schiever:tab_stoch_Bounds_and_cardinality_of_the_variables}, we give the bounds and cardinality of the set of discrete values of these variables. The bounds of the variables $\load_\hour$ and $\Stock_\hour$ are derived from the data of the problem, while the upper bound of $\Hydrogen^{\InDemand}_\hour$ is the maximum possible hydrogen demand.
\begin{table}[h!]
  \centering
  \begin{tabular}{||c c c c c||}
    \hline
    Variable &Lower bound & Upper bound & Cardinality& \\ [0.5ex] 
    \hline\hline
    $\load_\hour$ & $\MinLoad$=0.1 &1&30 & \\ 
    \hline
    $\Hydrogen^{\InDemand}_\hour$~(kg) &0 & $1.2\mu^{d}_{\hour+1}$ &7& \\
    \hline
    $\Stock_\hour$~(kg) & 25 & 750 & 300& \\
    \hline 
  \end{tabular}
  \caption{Bounds and cardinality of the set of discrete values of the variables of the operational Problem~\eqref{eq:Schiever_stochastic_decompo_operational}}
  \label{Schiever:tab_stoch_Bounds_and_cardinality_of_the_variables}
\end{table}

The SDDP algorithm is iterated \numprint{60} times, which is enough to obtain a small duality gap as discussed later. The gradient step is initialized at \numprint{5e-6} and diminishes by half each 15 iterations across a span of 51 iterations. The number of Monte-Carlo simulation to compute the gradient-like of the function $\widehat{\phi}^{\EL}[\widehat{\FinalCost}]$ and the total cost $\mathrm{val}(\mathcal{P_{\pi^{\lambda}}}[\widehat{\FinalCost}])$ using the policy~\eqref{eq:admissible-policy-stoch} are \numprint{2300} and \numprint{5000} respectively.

We display in Figure~\ref{fig:Schiever_dual_function_stochastic} the value of the function $\widehat{\phi}[\widehat{\FinalCost}]$ at each iteration of Algorithm~\ref{alg:Schiever_stochastic_decompo_algo} and the value returned by the policy~\eqref{eq:admissible-policy-stoch} when applied to primal problems $\mathrm{val}(\mathcal{P_{\pi^{\lambda}}}[\widehat{\FinalCost}])$ and $\mathrm{val}(\mathcal{P_{\pi^{\lambda}}}[\FinalCost])$ each 5 iterations. 

As $\widehat{\FinalCost}$ highly penalizes positive values of $\ElectricityCumul_\horizon$, it is expected to obtain nonpositive values for $\ElectricityCumul_\horizon$ when using the policy $\pi^\lambda$ and therefore to obtain the subsidy $c^s$. However, during some simulations of the policy $\pi^\lambda$ before iteration 30, the subsidy $c^s$ is not obtained, leading to the high values observed in the red curve. After iteration 30, $\ElectricityCumul_\horizon$ takes nonpositive values, and, as the same policy $\pi^\lambda$ is applied to both problems $\mathcal{P}[\widehat{\FinalCost}]$ and $\mathcal{P}[\FinalCost]$, the red and green curves coincide.
\begin{figure}[htpp]
  \centering
  \mbox{\includegraphics[width=0.6\textwidth]{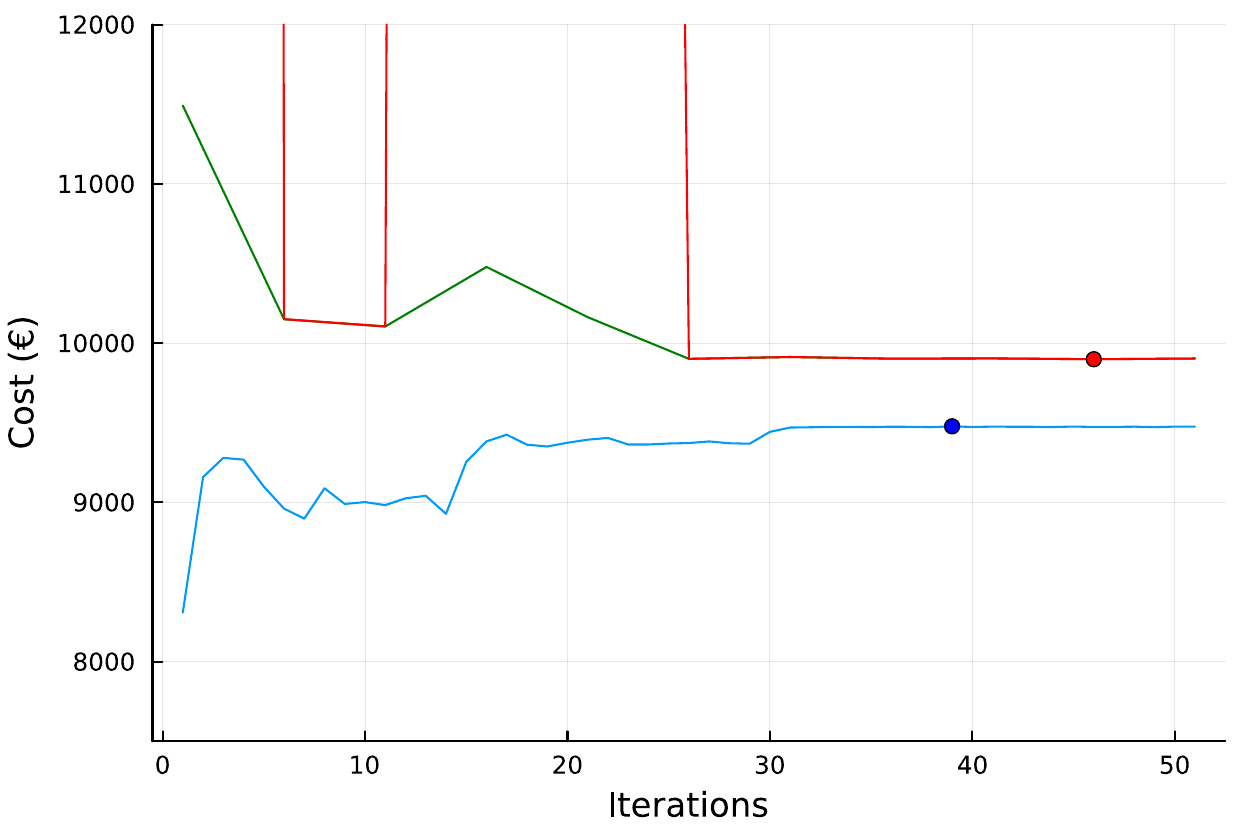}}
  \caption{The blue curve represents the evolution of the function $\widehat{\phi}[\widehat{\FinalCost}]$  along the iterations when using Algorithm~\ref{alg:Schiever_stochastic_decompo_algo}. The green and red curves represent the evolution of the primal problems $\mathcal{P_{\pi^{\lambda}}}\big[\widehat{\FinalCost}]$ and $\mathcal{P_{\pi^{\lambda}}}\big[\FinalCost]$ respectively when applying the policy $\pi^{\lambda}$. The blue point represents the maximal value obtained for the function $\widehat{\phi}[\widehat{\FinalCost}]+c^s$. The red point represents the minimal value obtained for $\mathrm{val}(\mathcal{P_{\pi^{\lambda}}}[\FinalCost])+c^s$. The costs displayed on the vertical axis are the real costs added with the subsidy $c^s$ to ease the reading}
  \label{fig:Schiever_dual_function_stochastic}
\end{figure}

As shown in Equation~\eqref{eq:comparison_dual_primal_stochastic}, the true duality gap $\mathrm{val}(\mathcal{P}[\FinalCost])-\mathrm{val}(\mathcal{D}[\FinalCost])$ is bounded by the difference 
$\mathrm{val}(\mathcal{P_{\pi^{\lambda}}}[\FinalCost])- \mathrm{val}(\mathcal{D}[\widehat{\FinalCost}]) $, that is, the difference between the red and blue points in Figure~\ref{fig:Schiever_dual_function_stochastic}.

This difference of \numprint{418}\euro~is \numprint[\%]{4} of the minimal value obtained for $\mathrm{val}(\mathcal{P_{\pi^{\lambda}}}[\FinalCost])$. This implies that Algorithm~\ref{alg:Schiever_stochastic_decompo_algo} gives good results and that the policy~\eqref{eq:admissible-policy-stoch} for Problem~\eqref{eq:Schiever_stochastic_formulation} is \numprint[\%]{4} optimal.

\subsection{Analysis of some scenarios}
We use the policy~\eqref{eq:admissible-policy-stoch} to simulate the evolution of the stock of hydrogen for an initial stock of \numprint[kg]{250} and for three different scenarios. The evolution of the hydrogen stock is displayed in Figure~\ref{fig:Schiever_stochastic_stock}. We note that the hydrogen stock for scenario 2 reaches high values during the first half of the week. This observation implies a low demand for hydrogen within this scenario. Additionally, it suggests that a maximal capacity of at least \numprint[kg]{500} is required to execute the policy effectively. It is noteworthy that nearly all the hydrogen in the storage is depleted by the end of the horizon for each scenario, which is expected, given the absence of final cost associated with hydrogen stock.

\begin{figure}[htpp]
  \centering
  \mbox{\includegraphics[width=0.50\textwidth]{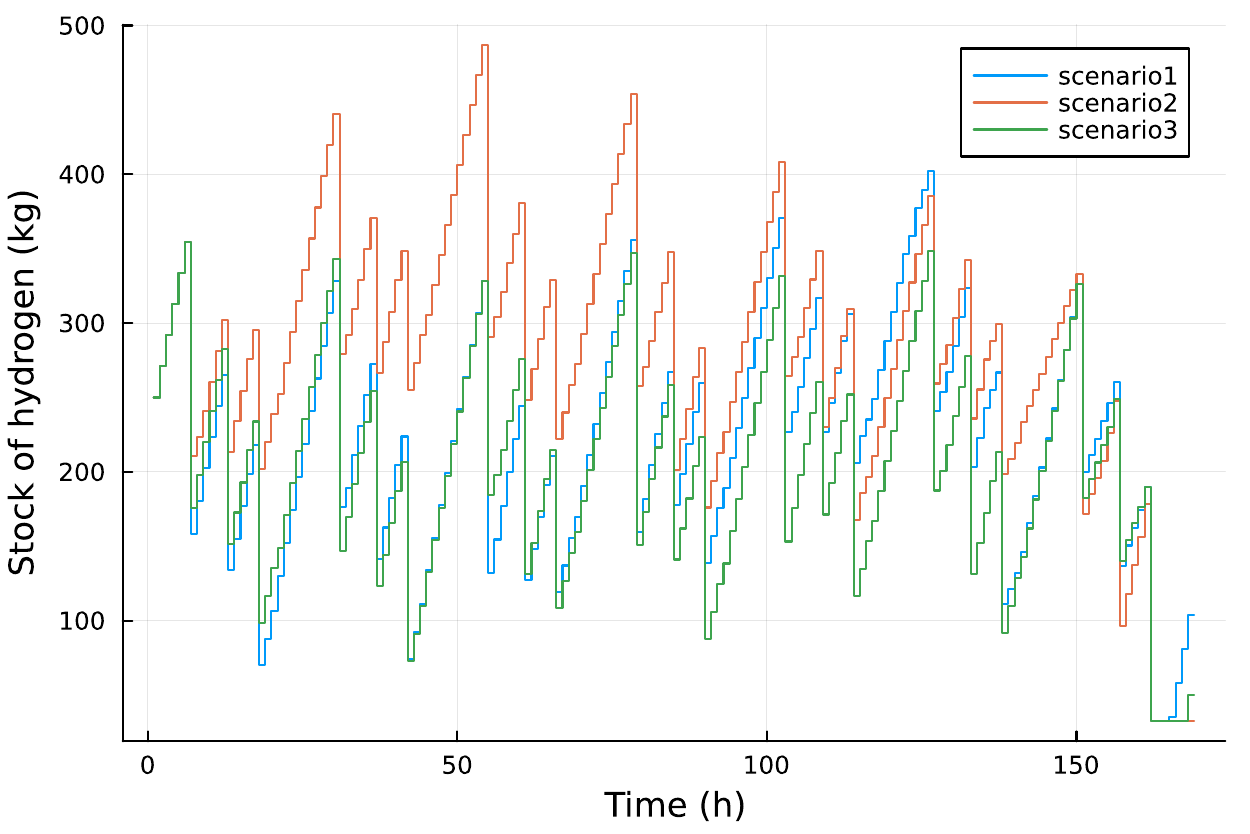}}
  \caption{Time evolution of the optimal stock for the three different scenarios}
  \label{fig:Schiever_stochastic_stock}
\end{figure}

We show in Figure~\ref{fig:Schiever_stochastic_demand_satisfaction} the evolution of the demand and the quantity of hydrogen extracted from the storage to satisfy the hydrogen demand for the three scenarios. As expected, there is low demand during the first half of the week for scenario 2, which explains the large stock of hydrogen during the same period. Note that the demand is always satisfied for every scenario.

\begin{figure}
  \centering
  \begin{subfigure}[b]{0.32\textwidth}
    \centering
    \includegraphics[width=\textwidth]{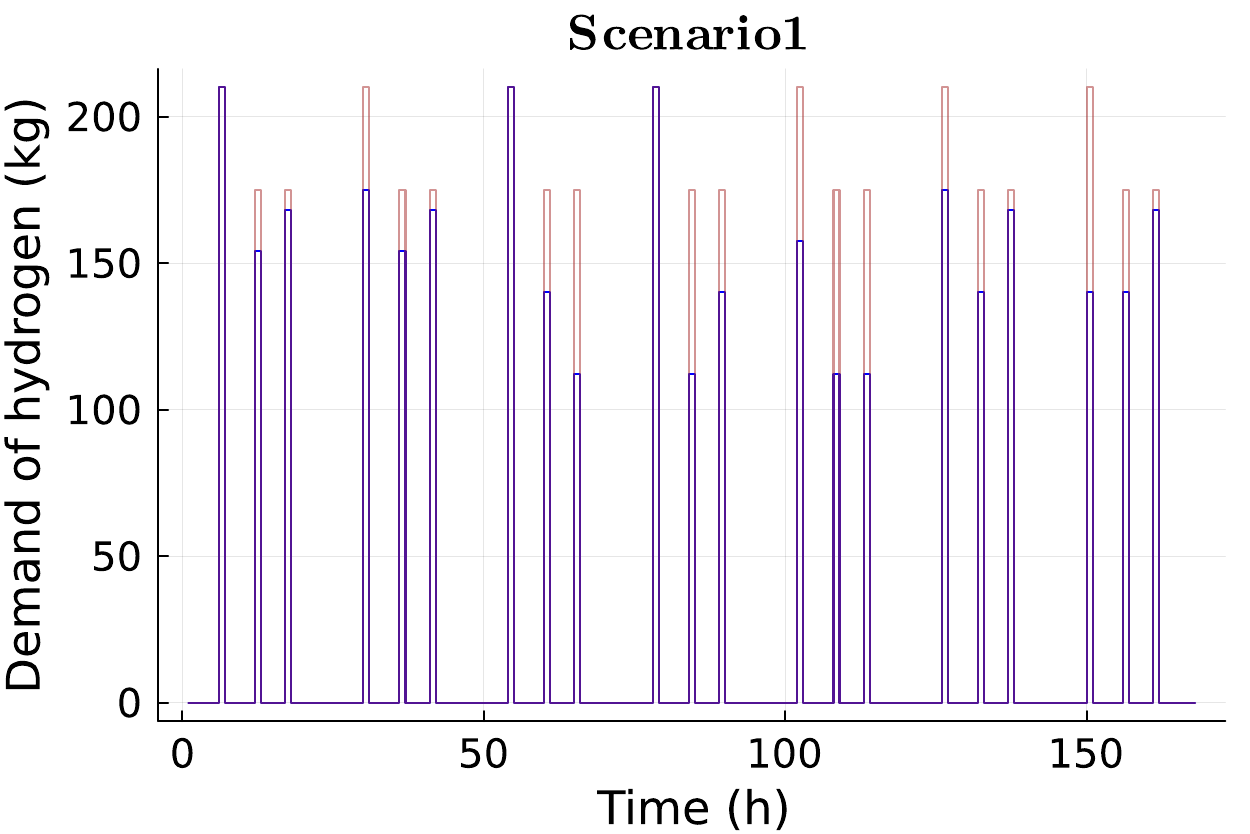}
    \captionsetup{labelformat=empty}
  \end{subfigure}%
  \hspace{0.15cm}\begin{subfigure}[b]{0.32\textwidth}
    \centering
    \includegraphics[width=\textwidth]{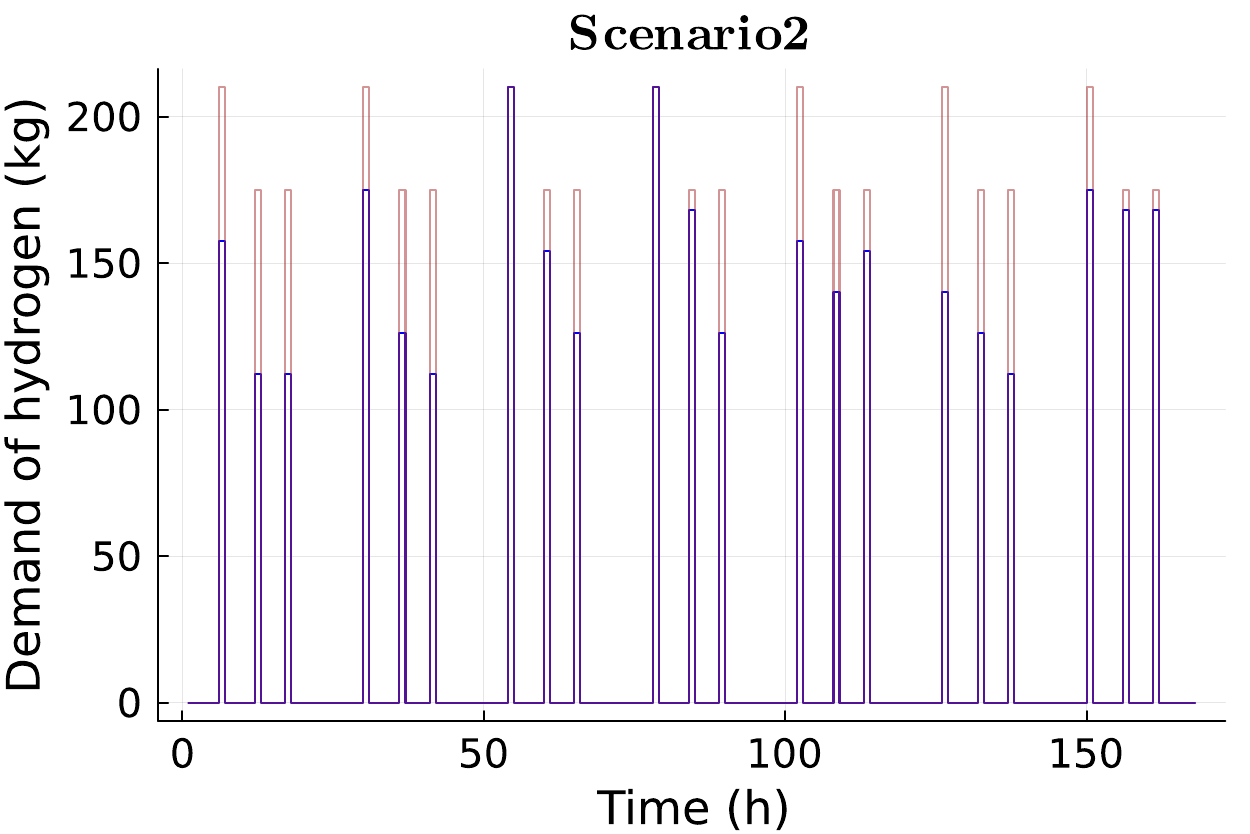}
    \captionsetup{labelformat=empty}
  \end{subfigure}
  \hspace{0.15cm}\begin{subfigure}[b]{0.32\textwidth}
    \centering
    \includegraphics[width=\textwidth]{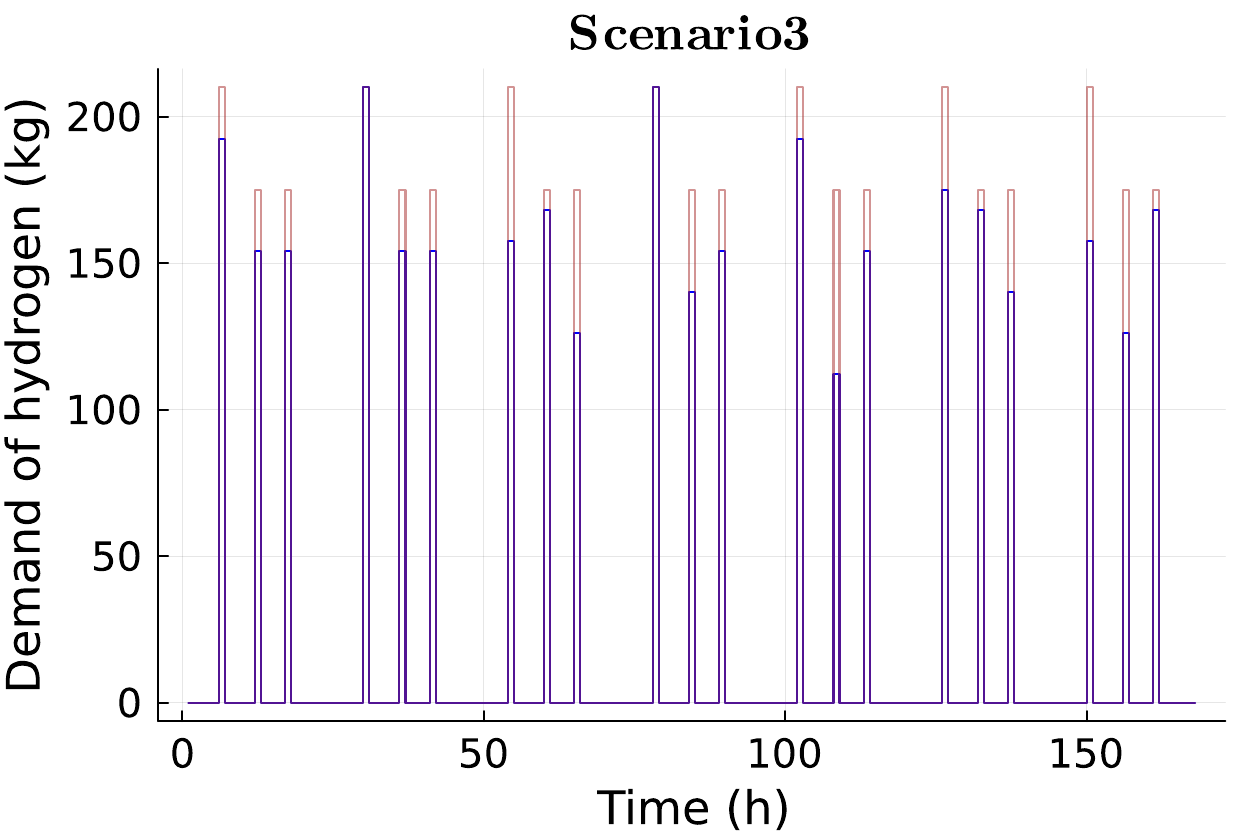}
    \captionsetup{labelformat=empty}
  \end{subfigure}
  \caption{Demand satisfaction for the three different scenarios. The brown bars correspond to the quantity of hydrogen $\Hydrogen^{\InDemand}$ extracted from the storage to satisfy the demand}
  \label{fig:Schiever_stochastic_demand_satisfaction}
\end{figure}

In Figure~\ref{fig:Schiever_stochastic_electricity_consumption}, the electricity consumption is displayed for the three scenarios. We note a pattern where grid electricity is predominantly utilized during the night, taking advantage of its lowest cost. Conversely, PV electricity generation aligns with daytime consumption. For both day and night periods, we complement our energy requirements with PPA. At the end of the week, grid electricity consumption falls below \numprint[\%]{20} of the total electricity consumption, leading to the acquisition of the subsidy $c^s$ for each scenario.

\begin{figure}[hbtp]
  \centering
  \begin{subfigure}[b]{0.32\textwidth}
    \centering
    \includegraphics[width=\textwidth]{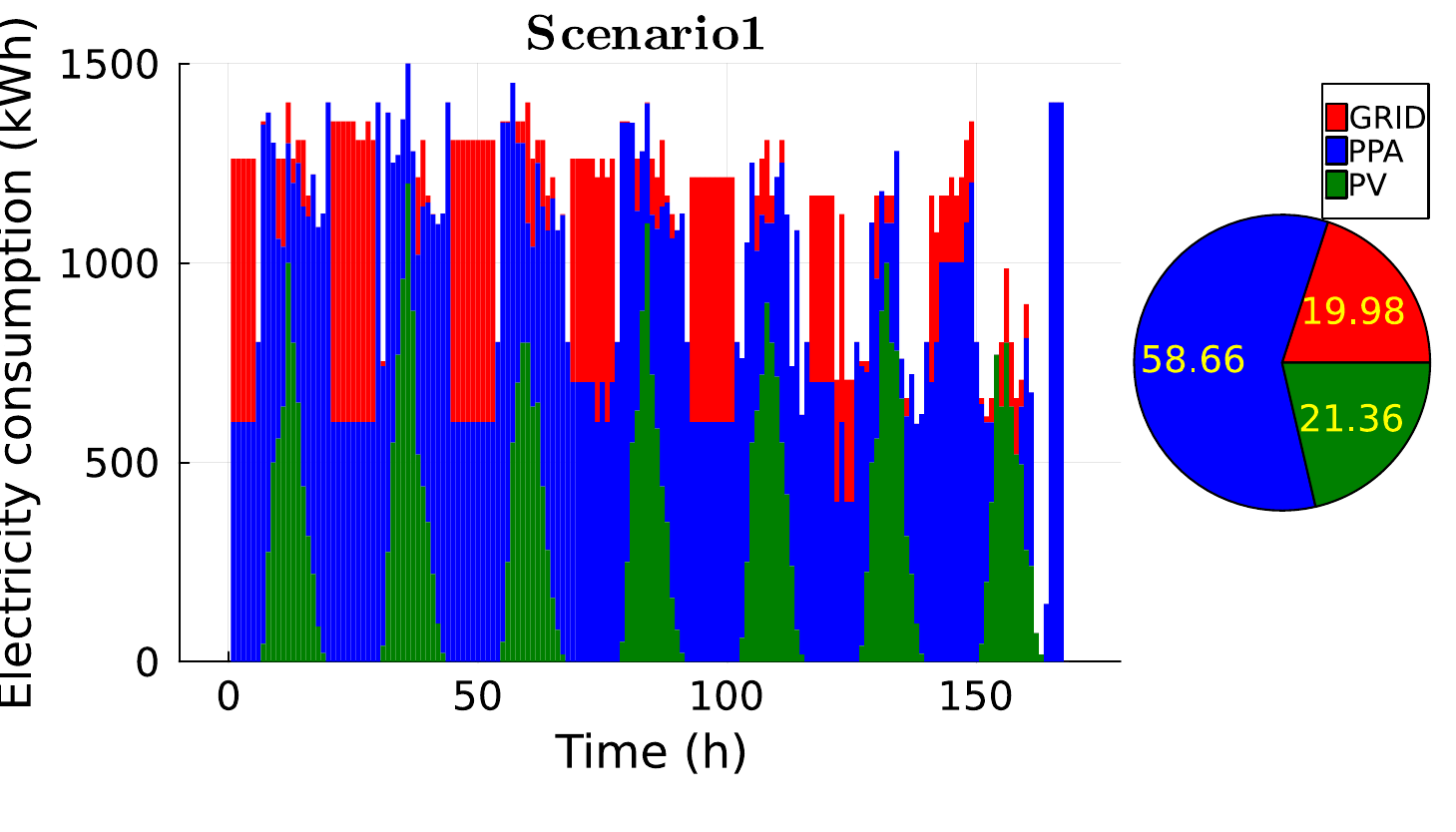}
    \captionsetup{labelformat=empty}
  \end{subfigure}%
  \hspace{0.15cm}\begin{subfigure}[b]{0.32\textwidth}
    \centering
    \includegraphics[width=\textwidth]{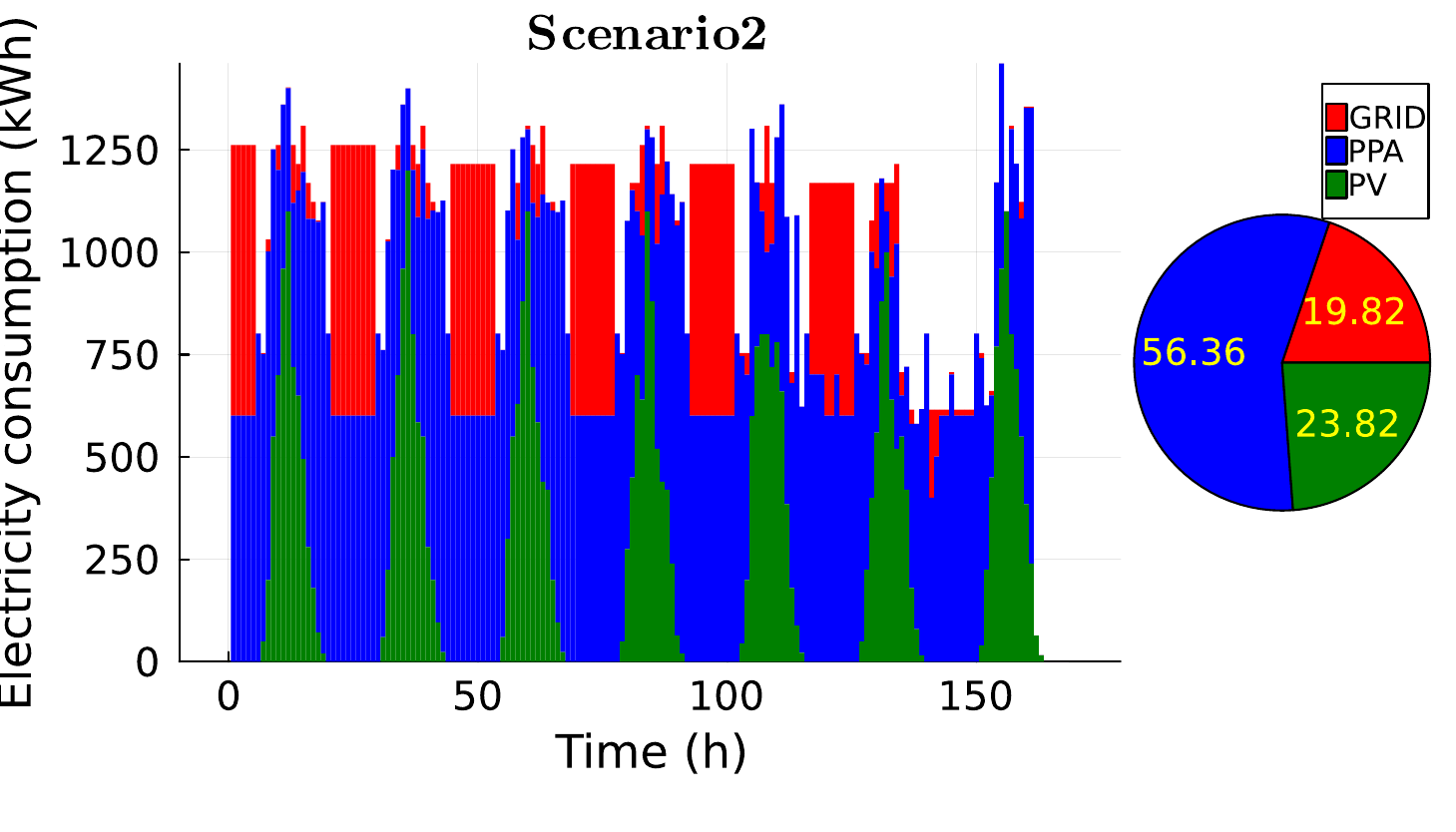}
    \captionsetup{labelformat=empty}
  \end{subfigure}
  \hspace{0.15cm}\begin{subfigure}[b]{0.32\textwidth}
    \centering
    \includegraphics[width=\textwidth]{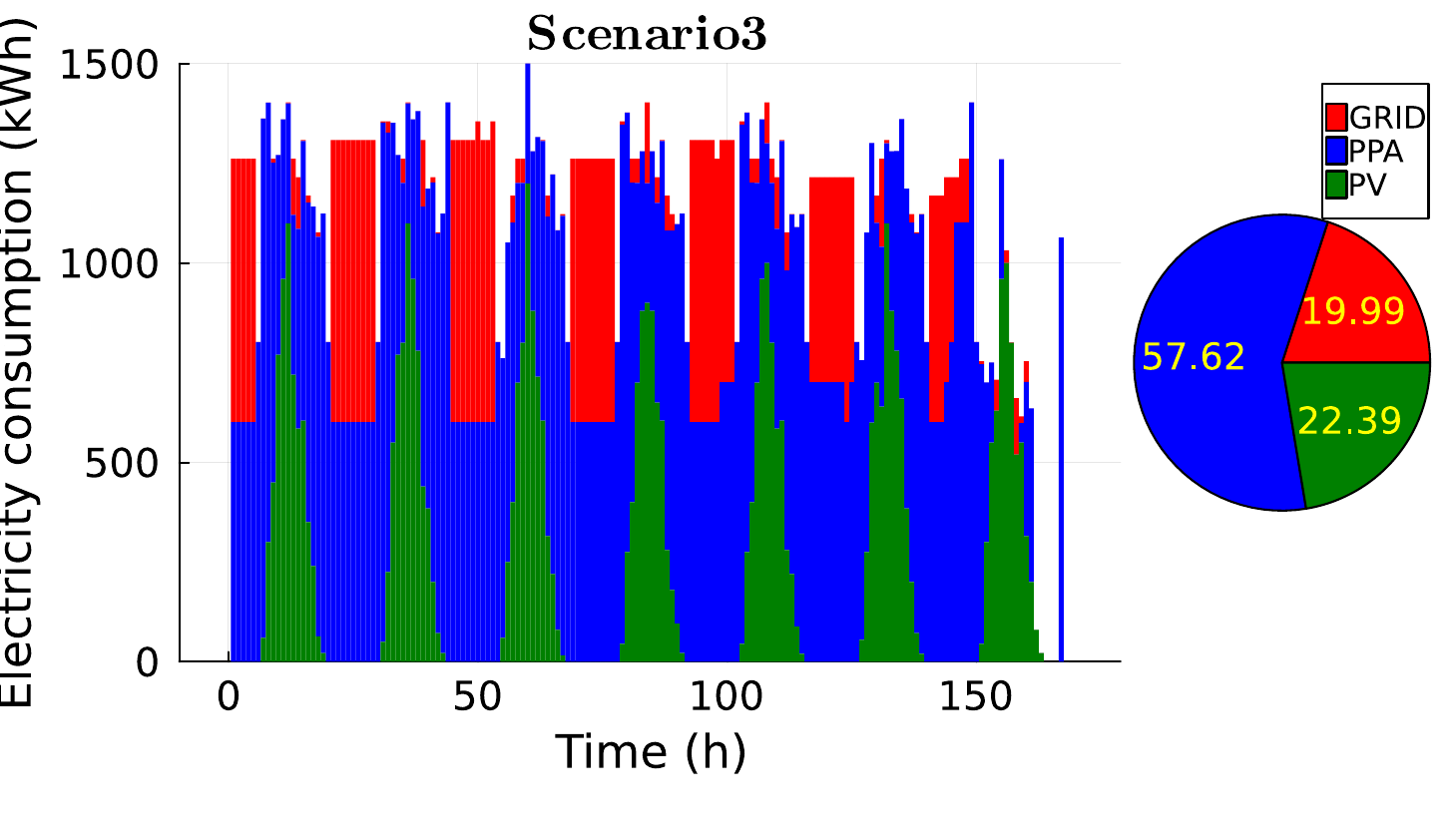}
    \captionsetup{labelformat=empty}
  \end{subfigure}
  \caption{Time evolution of the electricity consumption for the three different scenarios}
  \label{fig:Schiever_stochastic_electricity_consumption}
\end{figure}

\section{Conclusion}\label{Schiever_conclusion}
In this study, we have modeled a hydrogen infrastructure consisting of an electrolyser, compressor, and storage. This infrastructure is powered by a mix of electricity sources, namely PV (Photovoltaic), PPA (Power Purchase Agreement) and the grid, and is managed on an hourly basis to meet hydrogen demand. To address the uncertainties in photovoltaic production and hydrogen demand, we formulated the problem as a multistage stochastic optimization problem. Assuming stagewise independence of noise, we have developed a decomposition algorithm mixed with dynamic programming to solve the problem. Our numerical results are encouraging, with the resultant policy achieving a duality gap of \numprint[\%]{4}. Subsequently, we have analyzed the outcomes of simulations conducted under this policy.

\iffalse
One of the future research lines for this work is to consider the design problem of
the hydrogen infrastructure. Specifically, we intend to determine the optimal choice of the electrolyser and the hydrogen storage from a provided list of options.\rl{peut être supprimé ce dernier paragraph}
\fi 

\paragraph{Data Availability} All data used in the numerical experiments is generated as described in this manuscript.

\appendix
\section{Proof of Proposition~\ref{Schiever_proposition}}\label{Schiever_appendix_lemma}

\subsection{Proof of item~\ref{proposition_item1} of Proposition~\ref{Schiever_proposition}}
\label{proof:linearization_of_Q_dynamics}

We start by preliminary notations and results. 
First, we consider a
sequence~$\va{U}=\big(\va{\Electricity}^{\PPA}_{\hour}$, $\va{\Electricity}^{\Grid}_{\hour+1}$, $\va{\PPAStock}_{\hour+1}$, $
\va{\ElectricityCumul}_{\hour+1} \big)_{\hour \in \HOUR}$ admissible for the
Problem~$[\overline{\FinalCost}]${-}\eqref{eq:Schiever_stochastic_decompo_electricity}.
It is straightforward to check that the derived sequence denoted by $\gamma\np{\va{U}}$ and defined by

\begin{equation}
  \gamma\np{\va{U}}
  := \big(\va{\Electricity}^{\PPA}_{\hour},\va{\Electricity}^{\Grid}_{\hour+1},\va{\PPAStock}_{\hour+1},
  \va{\ElectricityCumul}_{\hour+1},(\va{\Electricity}^{\Grid}_{\hour+1})_+,
  \min(\overline{\Electricity},\va{\Electricity}^{\PPA}_{\hour}+\va{\Electricity}^{\PV}_{\hour+1}) \big)_{\hour \in \HOUR}
  \label{eq:derivedone}
\end{equation}
is admissible for Problem~$[\overline{\FinalCost}]${-}\eqref{eq:Schiever_stochastic_decompo_electricity2}.
Moreover, as the respective costs of Problem~$[\overline{\FinalCost}]$-~\eqref{eq:Schiever_stochastic_decompo_electricity} and
Problem~$[\overline{\FinalCost}]$-\eqref{eq:Schiever_stochastic_decompo_electricity2} have the same expression
only depending on the sequence
$\big(\va{\Electricity}^{\PPA}_{\hour},\va{\Electricity}^{\Grid}_{\hour+1},\va{\ElectricityCumul}_{\hour+1} \big)_{\hour \in \HOUR}$,
we obtain that the two sequence gives the same cost
\begin{equation}
  \mathbb{E} \Big[ \sum\limits_{\hour \in \HOUR}  \InstantaneousCost_\hour^{\EL}\bp{\va{\Electricity}^{\PPA}_\hour, \va{\Electricity}^{\Grid}_{\hour+1},\va{\Electricity}^{\PV}_{\hour+1},\lambda_\hour} +\widehat{\FinalCost}(\va{\ElectricityCumul}_\horizon) \Big]
  \eqfinp
\end{equation}

Second, we consider a sequence
$\va{V}=\big(\va{\Electricity}^{\PPA}_{\hour},\va{\Electricity}^{\Grid}_{\hour+1},\va{\PPAStock}_{\hour+1},
\va{\ElectricityCumul}_{\hour+1},\va{\Electricity}^{N}_{\hour+1}, \va{\Electricity}^{R}_{\hour+1} \big)_{\hour \in \HOUR}$
admissible for Problem~$[\overline{\FinalCost}]$-\eqref{eq:Schiever_stochastic_decompo_electricity2}.
We build the sequence $(\overline{\va{\ElectricityCumul}}_{\hour})_{\hour \in \HOUR}$ defined, for all
${\hour \in \HOUR}$ by 
\begin{equation}
  \overline{\va{\ElectricityCumul}}_0=0 \eqsepv
  \overline{\va{\ElectricityCumul}}_{\hour+1} =
  \overline{\va{\ElectricityCumul}}_\hour +(1-p)(\va{\Electricity}_{\hour+1}^{\Grid})_+
  -p\min(\overline{\Electricity},\va{\Electricity}^{\PPA}_\hour+\va{\Electricity}_{\hour+1}^{\PV}) \eqfinv
  \label{eq:Qbardef}
\end{equation}
and denote by $\varphi\np{\va{V}}$ the sequence
\begin{equation}
  \varphi\np{\va{V}}:= \big(\va{\Electricity}^{\PPA}_{\hour},\va{\Electricity}^{\Grid}_{\hour+1},\va{\PPAStock}_{\hour+1},
  \overline{\va{\ElectricityCumul}}_{\hour+1}) \big)_{\hour \in \HOUR}
  \label{eq:theta}
  \eqfinp
\end{equation}
We straightforwardly check that the derived sequence $\gamma(\varphi(\va{V}))$ is
admissible for Problem~$[\overline{\FinalCost}]$-\eqref{eq:Schiever_stochastic_decompo_electricity2} and
gives a lower cost. Indeed, it follows from~Equation~\eqref{eq:Qbardef} and~\eqref{eq:schiever_stochastic_linearization_constraints}
that $\overline{\va{\ElectricityCumul}}_{\horizon} \le {\va{\ElectricityCumul}}_{\horizon}$ which combined with the fact that
the function $\overline{\FinalCost}$ is nondecreasing gives a lower cost for the derived sequence.
Moreover, using the first part, we also have that $\varphi(\va{V})$ is admissible for
Problem~$[\overline{\FinalCost}]${-}\eqref{eq:Schiever_stochastic_decompo_electricity} with the same cost as the one
given by $\gamma(\varphi(\va{V}))$ in Problem~$[\overline{\FinalCost}]$-\eqref{eq:Schiever_stochastic_decompo_electricity2}.

\medskip
Now, we turn to the proof of item~\ref{proposition_item1} of Proposition~\ref{Schiever_proposition}.
We denote by $h^{\EL}$ (resp $\widehat{h}^{\EL}$) the cost function of Problem~$[\overline{\FinalCost}]${-}\eqref{eq:Schiever_stochastic_decompo_electricity} (resp. Problem~$[\overline{\FinalCost}]$-\eqref{eq:Schiever_stochastic_decompo_electricity2}).

We consider $\lambda \in \mathbb{R}^\horizon$ and assume that $\phi^{\EL}[\widehat{\FinalCost}](\lambda)$,
the optimal cost of Problem~$[\overline{\FinalCost}]$-\eqref{eq:Schiever_stochastic_decompo_electricity}, is finite.
For $\xi>0$, consider $\va{U}^{\xi}$ a $\xi$-optimal solution of Problem~$[\overline{\FinalCost}]${-}\eqref{eq:Schiever_stochastic_decompo_electricity}. Using the preliminary part, we have that
\begin{align*}
  \widehat{\phi}^{\EL}[\widehat{\FinalCost}](\lambda)
  \le \widehat{h}^E \bp{\gamma(\va{U}^{\xi})} = h^E \bp{\va{U}^{\xi})} \le \phi^{\EL}[\widehat{\FinalCost}](\lambda) + \xi
  \eqfinv
\end{align*}
which gives that $\widehat{\phi}^{\EL}[\widehat{\FinalCost}](\lambda) \le  \phi^{\EL}[\widehat{\FinalCost}](\lambda)$ and therefore
$\widehat{\phi}^{\EL}[\widehat{\FinalCost}](\lambda) < +\infty$.
Now, first assume that $\widehat{\phi}^{\EL}[\widehat{\FinalCost}](\lambda)$ is finite and consider $\va{V}^{\xi}$ a
$\xi$-optimal solution for Problem~$[\overline{\FinalCost}]$-\eqref{eq:Schiever_stochastic_decompo_electricity2}.
We obtain that
\begin{align*}
  \phi^{\EL}[\widehat{\FinalCost}](\lambda)
  \le h^E \bp{\varphi(\va{V}^{\xi})}
  = \widehat{h}^E \bp{\gamma(\varphi(\va{V}^{\xi})}
  \le \widehat{h}^E \bp{\va{V}^{\xi}}  \le \widehat{\phi}^{\EL}[\widehat{\FinalCost}](\lambda) + \xi
  \eqfinv
\end{align*}
which gives the equality $\phi^{\EL}[\widehat{\FinalCost}](\lambda) = \widehat{\phi}^{\EL}[\widehat{\FinalCost}](\lambda)$.
Second, it remains to consider the case $\widehat{\phi}^{\EL}[\widehat{\FinalCost}](\lambda)= -\infty$. For each $n\in \NN$
we can find an admissible $\va{V}_n$ for  Problem~$[\overline{\FinalCost}]${-}\eqref{eq:Schiever_stochastic_decompo_electricity}
satisfying $\widehat{h}^E(\va{V}_n) \le -n$ and proceeding as above we $\phi^{\EL}[\widehat{\FinalCost}](\lambda) \le -n$ for all $n \in \NN$ which contradict the assumption that $\phi^{\EL}[\widehat{\FinalCost}](\lambda)$ is finite.

The cases $\phi^{\EL}[\widehat{\FinalCost}](\lambda) \in \na{+\infty, -\infty}$ can be treated in a similar way and are left to the reader.

\subsection{Proof of item~\ref{proposition_item2} of Proposition~\ref{Schiever_proposition}}\label{proof:ElectricityCumul_bound}
\begin{proof}
Fix $\lambda \in \mathbb{R}^\horizon$ and consider the optimization Problem~\eqref{eq:Schiever_stochastic_decompo_electricity}. As a preliminary fact, we prove that for a feasible solution of Problem~\eqref{eq:Schiever_stochastic_decompo_electricity}, its component ``cumulative electricity'' at time $\horizon$ satisfies $\va{\ElectricityCumul}_\horizon \in [\underline{\ElectricityCumul}, \overline{\ElectricityCumul}]$. First, using the cumulative electricity state dynamics~\eqref{eq:cumul_dynamics}, we obtain the lower bound
\begin{align}
\va{\ElectricityCumul}_\horizon &= \sum_{\hour= 0}^{\horizon-1} \underbrace{(1-p)(\va{\Electricity}_{\hour+1}^{\Grid})_+}_{\geq 0}-p\min(\overline{\Electricity},\va{\Electricity}^{\PPA}_{\hour}+\va{\Electricity}_{\hour+1}^{\PV}) \nonumber
\\ 
&\geq \sum_{\hour=0}^{\horizon-1} -p\min(\overline{\Electricity},\va{\Electricity}^{\PPA}_{\hour}+\va{\Electricity}_{\hour+1}^{\PV}) \geq \sum_{\hour=0}^{\horizon-1} -p\overline{\Electricity} = -\horizon p\overline{\Electricity}\nonumber \eqfinp 
\intertext{Second, we obtain the upper bound}
\va{\ElectricityCumul}_\horizon &= \sum_{\hour= 0}^{\horizon-1} (1-p)(\va{\Electricity}_{\hour+1}^{\Grid})_+ \underbrace{-p\min(\overline{\Electricity},\va{\Electricity}^{\PPA}_{\hour}+\va{\Electricity}_{\hour+1}^{\PV})}_{\leq 0} \nonumber
\\ 
&\leq  \sum_{\hour= 0}^{\horizon-1} (1-p)(\va{\Electricity}_{\hour+1}^{\Grid})_+ \leq \sum_{\hour=0}^{\horizon-1} (1-p)\overline{\Electricity} = \horizon(1-p)\overline{\Electricity} \nonumber \eqfinp \tag{as $\va{\Electricity}^{\Grid}_{\hour+1} \leq \overline{\Electricity}$ by~\eqref{eq:schiever_total_elec_bounded}}
\end{align}

Now, we prove that $\phi[\overline{\FinalCost}](\lambda) \leq
\phi[\FinalCost](\lambda)$. Fix $\lambda \in \mathbb{R}^\horizon$. The optimal cost of
Problem~\eqref{eq:Schiever_stochastic_decompo_electricity}
$\phi^{\EL}[\overline{\FinalCost}](\lambda)$ is in $\RR \cup \na{+\infty}$ as the
feasible set of Problem~\eqref{eq:Schiever_stochastic_decompo_electricity} is
bounded, the objective function of
Problem~\eqref{eq:Schiever_stochastic_decompo_electricity} is proper as
$\overline{\FinalCost}$ is proper and $(\va{\Electricity}^{\PV}_{\hour+1})_{\hour
\in \HOUR}$ has a finite support. Thus, for a given $\zeta>0$, there exists
$\big(\va{\Electricity}^{\PPA,\zeta}_{\hour},\va{\Electricity}^{\Grid,\zeta}_{\hour+1},\va{\PPAStock}^{\zeta}_{\hour},\va{\ElectricityCumul}^{\zeta}_\hour
\big)_{\hour \in \HOUR}$ in the feasible of
Problem~\eqref{eq:Schiever_stochastic_decompo_electricity} satisfying
\begin{equation}\label{eq:Schiever_zeta_elec}
\mathbb{E} \Big[\sum\limits_{\hour \in \HOUR}  \InstantaneousCost_\hour^{\EL}\bp{\va{\Electricity}^{\PPA,\zeta}_\hour,
      \va{\Electricity}^{\Grid,\zeta}_{\hour+1},\va{\Electricity}^{\PV}_{\hour+1},\lambda_\hour} +\FinalCost(\va{\ElectricityCumul}^{\zeta}_\horizon) \Big] \leq \phi^{\EL}[\FinalCost] + \zeta \eqfinp
  \end{equation}
 
  We immediately obtain that the control $\big(\va{\Electricity}^{\PPA,\zeta}_{\hour},\va{\Electricity}^{\Grid,\zeta}_{\hour+1},\va{\PPAStock}^{\zeta}_{\hour},\va{\ElectricityCumul}^{\zeta}_\hour \big)_{\hour \in \HOUR}$  is feasible for Problem~\eqref{eq:Schiever_stochastic_decompo_electricity} where $\FinalCost$ is replaced by $\overline{\FinalCost}$. Therefore we have
  
\begin{align}
\phi^{\EL}[\overline{\FinalCost}](\lambda) &\leq  \mathbb{E} \Big[\sum\limits_{\hour \in \HOUR}  \InstantaneousCost_\hour^{\EL}\bp{\va{\Electricity}^{\PPA,\zeta}_\hour, \va{\Electricity}^{\Grid,\zeta}_{\hour+1},\va{\Electricity}^{\PV}_{\hour+1},\lambda_\hour} + \overline{\FinalCost}(\va{\ElectricityCumul}^{\zeta}_\horizon) \Big] \nonumber \tag{solution feasibility}
 \\ &\leq \mathbb{E} \Big[ \sum\limits_{\hour \in \HOUR}  \InstantaneousCost_\hour^{\EL}\bp{\va{\Electricity}^{\PPA,\zeta}_\hour,
     \va{\Electricity}^{\Grid,\zeta}_{\hour+1},\va{\Electricity}^{\PV}_{\hour+1},\lambda_\hour} + \FinalCost(\va{\ElectricityCumul}^{\zeta}_\horizon)\Big] \nonumber \tag{$\overline{\FinalCost}\leq \FinalCost$ in $[\underline{\ElectricityCumul},\overline{\ElectricityCumul}]$}
      \\ &\leq \phi^{\EL}[\FinalCost](\lambda) + \zeta \eqfinp \nonumber \tag{using~\eqref{eq:Schiever_zeta_elec}}
\end{align}
We conclude that $\phi^{\EL}[\overline{\FinalCost}](\lambda) \leq \phi^{\EL}[\FinalCost](\lambda) + \zeta$ for all $\zeta >0$, and therefore that\\ $\phi^{\EL}[\overline{\FinalCost}](\lambda) \leq \phi^{\EL}[\FinalCost](\lambda)$. Finally, we have 

\begin{align}
\phi[\overline{\FinalCost}](\lambda) &= \phi^{\EL}\big[\overline{\FinalCost}](\lambda) + \phi^{\OP}(\lambda) \nonumber
\\ 
&\leq \phi^{\EL}[\FinalCost](\lambda) + \phi^{\OP}(\lambda) \nonumber
\\ 
&= \phi[\FinalCost](\lambda) \eqsepv \forall \lambda \in \mathbb{R}^\horizon \eqfinp \nonumber
\end{align}

This ends the proof.
\end{proof}

\subsection{Proof of item~\ref{proposition_item3} of Proposition~\ref{Schiever_proposition}}\label{proof:choice_of_final_cost}
\begin{proof}
  First, we prove that Problem~\eqref{eq:Schiever_stochastic_decompo_electricity2}  is a convex optimization problem. The function $\widehat{\FinalCost}$ is the maximum of affine functions with nonnegative slopes, we conclude that $\widehat{\FinalCost}$ is proper, nondecreasing and convex, which satisfies the assumptions of item~\ref{proposition_item1} of Proposition~\ref{Schiever_proposition}. Consequently, Problem~\eqref{eq:Schiever_stochastic_decompo_electricity2} is a convex optimization problem for all $\lambda \in \mathbb{R}^\horizon$.
  
  Second, we prove that $\widehat{\FinalCost} \leq \FinalCost$ in the interval $[\underline{\ElectricityCumul},\overline{\ElectricityCumul}]$.
 We distinguish the two following cases
\begin{itemize}
\item For $\ElectricityCumul_\horizon \leq 0$ we have

\begin{equation}
\widehat{\FinalCost}(\ElectricityCumul_\horizon) = \beta_1 \ElectricityCumul_\horizon - c^s \leq -c^s = \FinalCost(\ElectricityCumul_\horizon) \eqfinp
\end{equation}
\item Conversely, for $\ElectricityCumul_\horizon > 0$, we have 

\begin{subequations}
\begin{equation}
\widehat{\FinalCost}(\ElectricityCumul_\horizon) = \beta_2 \ElectricityCumul_\horizon - c^s \eqfinv
\end{equation}
\begin{equation}
\FinalCost(\ElectricityCumul_\horizon)=0 \eqfinv
\end{equation}
\begin{equation}
\widehat{\FinalCost}(\ElectricityCumul_\horizon) \leq \FinalCost(\ElectricityCumul_\horizon) \eqsepv \forall \ElectricityCumul_\horizon \in ]0,\overline{\ElectricityCumul}] \iff \beta_2 \leq \frac{c^s}{\ElectricityCumul_\horizon} \eqsepv \forall \ElectricityCumul_\horizon \in ]0,\overline{\ElectricityCumul}] \iff \beta_2 \leq \frac{c^s}{\overline{\ElectricityCumul}} \eqfinp
\end{equation}
\end{subequations}
\end{itemize}
We conclude that if $\beta_2 \leq \frac{c^s}{\overline{\ElectricityCumul}}$ then $\widehat{\FinalCost}\leq \FinalCost$ in $[\underline{\ElectricityCumul},\overline{\ElectricityCumul}]$ and therefore by using item~\ref{proposition_item2} of Proposition~\ref{Schiever_proposition} that $\widehat{\phi}[\widehat{\FinalCost}] \leq \phi[\FinalCost]$. This ends the proof. 
\end{proof}

\section{Initialization of Lagrange multiplier}
\begin{lemma}\label{lemma:schiever_lemma_multiplier}
  Let $f$, $(g_\hour)_{\hour \in \HOUR}$ and $\FinalCost$ be convex functions taking finite values, $C_u$ and $C_y$ be closed convex sets with $C_u \subset \mathbb{R}^{m \times \horizon}$ and $C_y \subset \mathbb{R}^\horizon$,
 and $(\alpha_\hour,\beta_\hour,a_\hour,b_\hour)_{\hour \in \HOUR}$ a sequence of positive parameters. Given $\hour' \in \HOUR$, consider the following optimization Problem for all $\epsilon \geq 0$:
\begin{subequations}\label{eq:schiever_lemma_multiplier}
  \begin{equation}
    \psi_{\hour'}(\epsilon) = \min\limits_{\np{ 
        x_\hour,y_\hour,u_\hour
      }_{\hour \in \HOUR}} \sum\limits_{\hour \in \HOUR}
    \big(\alpha_\hour x_\hour +\beta_\hour y_\hour \big)+ f\big((u_\hour)_{\hour \in \HOUR}\big)+ \FinalCost \big(\sum_{\hour \in \HOUR} a_\hour x_\hour-b_\hour y_\hour\big)
  \end{equation}
  subject to the following constraints
  \begin{align}
  (u_\hour)_{\hour \in \HOUR} &\in C_u \eqfinv
  \\ 
  (y_\hour)_{\hour \in \HOUR} &\in C_y \eqfinv
  \\
  g_{\hour}(u_\hour)&\leq x_\hour + y_\hour  \eqsepv \forall \hour \in \HOUR \setminus \{\hour'\} \eqfinv
  \\ 
   g_{\hour'}(u_{\hour'}) &\leq x_{\hour'} + y_{\hour'}+\epsilon  \eqfinv \label{eq:schiever_lemma_constraint_lambda}
      \\
     0 &\leq x_\hour, y_\hour \eqsepv \forall \hour \in \HOUR \eqfinp
\end{align}
\end{subequations}
We assume that if $(y_\hour)_{\hour \in \HOUR} \in C_y$ then $\{(\hat{y}_{\hour})_{\hour \in \HOUR} |\: \hat{y}_{\hour} \leq y_{\hour} \eqsepv \forall \hour \in \HOUR\} \subset C_y$.
We also assume that for $\epsilon \geq 0$, the Lagrangian of Problem~\eqref{eq:schiever_lemma_multiplier} when dualizing Constraint~\eqref{eq:schiever_lemma_constraint_lambda} admits a saddle point $\big( (x^{\epsilon},y^{\epsilon},u^{\epsilon}), \lambda^\epsilon_{\hour'} \big)$ where  $ (x^{\epsilon},y^{\epsilon},u^{\epsilon}) = (x^{\epsilon}_\hour,y^{\epsilon}_\hour,u^{\epsilon}_\hour)_{\hour \in \HOUR}$ is the optimal solution of Problem~\eqref{eq:schiever_lemma_multiplier}, and $\lambda^{\epsilon}_{\hour'}$ is the Lagrange multiplier associated with constraint~\eqref{eq:schiever_lemma_constraint_lambda}.

If $x^{0}_{\hour'}>0$ and $y^{0}_{\hour'}>0$ then $\lambda^{0}_{\hour'} \geq \frac{ \alpha_{\hour'}b_{\hour'}}{a_{\hour'} + b_{\hour'}} + \frac{\beta_{\hour'}a_{\hour'}}{a_{\hour'} + b_{\hour'}}$.

\end{lemma}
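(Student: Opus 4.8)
The plan is to use the saddle-point assumption at $\epsilon = 0$ and to test the partially minimized Lagrangian against a one-parameter family of feasible perturbations of the optimal point, chosen so that every term of the Lagrangian except the linear cost $\sum_{\hour}(\alpha_\hour x_\hour + \beta_\hour y_\hour)$ and the dualized slack stays put. Write $L\bp{(x,y,u),\lambda_{\hour'}}$ for the Lagrangian of Problem~\eqref{eq:schiever_lemma_multiplier} obtained by dualizing only constraint~\eqref{eq:schiever_lemma_constraint_lambda} with multiplier $\lambda_{\hour'}$; at $\epsilon = 0$ this adds the term $\lambda_{\hour'}\bp{g_{\hour'}(u_{\hour'}) - x_{\hour'} - y_{\hour'}}$ to the objective. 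The saddle-point hypothesis says in particular that $(x^0,y^0,u^0)$ minimizes $L\bp{\cdot,\lambda^0_{\hour'}}$ over all $(x,y,u)$ satisfying all the constraints of Problem~\eqref{eq:schiever_lemma_multiplier} \emph{except}~\eqref{eq:schiever_lemma_constraint_lambda}. I will use only this ``$\min$'' half of the saddle-point property; the ``$\max$'' half merely gives $\lambda^0_{\hour'}\ge 0$ and is not needed here.

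Next I would introduce, for $t \le 0$, the perturbed point $(\widetilde{x},\widetilde{y},u^0)$ that agrees with $(x^0,y^0,u^0)$ in every coordinate except $\widetilde{x}_{\hour'} = x^0_{\hour'} + t\,b_{\hour'}$ and $\widetilde{y}_{\hour'} = y^0_{\hour'} + t\,a_{\hour'}$. The coefficients $b_{\hour'}$ and $a_{\hour'}$ are chosen precisely so that $a_{\hour'}\widetilde{x}_{\hour'} - b_{\hour'}\widetilde{y}_{\hour'} = a_{\hour'} x^0_{\hour'} - b_{\hour'} y^0_{\hour'}$, hence $\sum_\hour a_\hour \widetilde{x}_\hour - b_\hour \widetilde{y}_\hour$ is unchanged and the $\FinalCost$ term does not move. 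Admissibility of $(\widetilde{x},\widetilde{y},u^0)$ for the relaxed problem then has to be checked: $u = u^0$ keeps $(u_\hour)_\hour \in C_u$ and leaves the constraints $g_\hour(u_\hour) \le x_\hour + y_\hour$ intact for $\hour \ne \hour'$; since $t \le 0$ and $a_{\hour'} > 0$, the component $\widetilde{y}_{\hour'}$ is \emph{smaller} than $y^0_{\hour'}$, so $(\widetilde{y}_\hour)_\hour$ stays in $C_y$ by the stated downward-closedness of $C_y$; and because $x^0_{\hour'} > 0$, $y^0_{\hour'} > 0$ and $a_{\hour'},b_{\hour'} > 0$, for $|t|$ small enough one still has $\widetilde{x}_{\hour'} \ge 0$ and $\widetilde{y}_{\hour'} \ge 0$. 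Constraint~\eqref{eq:schiever_lemma_constraint_lambda} itself need not be satisfied, having been dualized. This feasibility verification — and in particular seeing that strict positivity of $x^0_{\hour'}$ and $y^0_{\hour'}$ together with the one-sided structure of $C_y$ is exactly what makes the perturbation legal — is the step I expect to need the most care.

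Finally I would evaluate the Lagrangian at $(\widetilde{x},\widetilde{y},u^0)$. Since $u$, the argument of $\FinalCost$, and $g_{\hour'}(u_{\hour'})$ are all unchanged, only the linear cost and the term $-\lambda^0_{\hour'}(x_{\hour'} + y_{\hour'})$ vary, giving
\[
  L\bp{(\widetilde{x},\widetilde{y},u^0),\lambda^0_{\hour'}}
  - L\bp{(x^0,y^0,u^0),\lambda^0_{\hour'}}
  = t\,\Bp{\alpha_{\hour'} b_{\hour'} + \beta_{\hour'} a_{\hour'}
    - \lambda^0_{\hour'}\np{a_{\hour'} + b_{\hour'}}} \eqfinp
\]
By minimality of $(x^0,y^0,u^0)$ the left-hand side is $\ge 0$ for every small $t \le 0$, which forces $\alpha_{\hour'} b_{\hour'} + \beta_{\hour'} a_{\hour'} - \lambda^0_{\hour'}(a_{\hour'} + b_{\hour'}) \le 0$; dividing by $a_{\hour'} + b_{\hour'} > 0$ yields $\lambda^0_{\hour'} \ge \frac{\alpha_{\hour'} b_{\hour'}}{a_{\hour'} + b_{\hour'}} + \frac{\beta_{\hour'} a_{\hour'}}{a_{\hour'} + b_{\hour'}}$, which is the asserted bound. (If $C_y$ carried no upper restriction one could also take $t \ge 0$ and obtain the reverse inequality, i.e.\ an exact identity for $\lambda^0_{\hour'}$; here only the lower bound survives, which is all the lemma claims.)
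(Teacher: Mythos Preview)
Your proof is correct and uses the same key perturbation as the paper (decrease $x_{\hour'}$ and $y_{\hour'}$ in the ratio $b_{\hour'}:a_{\hour'}$ so that the argument of $\FinalCost$ is unchanged), only packaged slightly differently: the paper builds a feasible point for the $\epsilon$-perturbed problem and uses that $-\lambda^0_{\hour'}$ is a subgradient of the convex value function $\psi_{\hour'}$ at $0$, whereas you invoke directly the Lagrangian-minimality half of the saddle point. Up to this cosmetic difference in framing, the two arguments coincide.
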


\begin{proof}

 Assume that  $x^{0}_{\hour'}$ and $ y^{0}_{\hour'}$ are positive, and choose $\epsilon$ such that $\min(x^{0}_{\hour'}, y^{0}_{\hour'})>\epsilon>0 $. We construct a solution for Problem~\eqref{eq:schiever_lemma_multiplier} with the given $\epsilon$, denoted by $(x^{\epsilon,\#}_\hour,y^{\epsilon,\#}_\hour,u^{\epsilon,\#}_\hour)_{\hour \in \HOUR}$, in the following manner
\begin{equation}
\begin{cases} 
y^{\epsilon,\#}_\hour = y^{0}_\hour \eqsepv  \forall \hour \in \HOUR \setminus \{\hour'\}, \\ 
 y^{\epsilon,\#}_{\hour'} = y^{0}_{\hour'} - \frac{a_{\hour'}}{a_{\hour'} + b_{\hour'}}\epsilon  \eqfinv \\
u^{\epsilon,\#}_\hour = u^{0}_\hour \eqsepv  \forall \hour \in \HOUR, \\ 
x^{\epsilon,\#}_\hour = x^{0}_\hour \eqsepv  \forall \hour \in \HOUR\setminus\{\hour'\}, \\ 
x^{\epsilon,\#}_{\hour'} = x^{0}_{\hour'}-\frac{b_{\hour'}}{a_{\hour'} + b_{\hour'}}\epsilon \eqfinp \nonumber
\end{cases} 
\end{equation}

It is immediate to see that $(x^{\epsilon,\#}_\hour,y^{\epsilon,\#}_\hour,u^{\epsilon,\#}_\hour)_{\hour \in \HOUR}$ is feasible for Problem~\eqref{eq:schiever_lemma_multiplier} as it differs from $(x^{0}_\hour,y^{0}_\hour,u^{0}_\hour)_{\hour \in \HOUR}$ only for $\hour'$ with  $x^{\epsilon,\#}_{\hour'} \geq 0$, $y^{\epsilon,\#}_{\hour'} \geq 0, (y^{\epsilon,\#}_\hour)_{\hour \in \HOUR} \in C_y$, and $ x^{\epsilon,\#}_{\hour'}+ y^{\epsilon,\#}_{\hour'} + \epsilon = x^{0}_{\hour'}+ y^{0}_{\hour'} \geq g_{\hour'}(u^{0}_{\hour'}) = g_{\hour'}(u^{\epsilon,\#}_{\hour'})$.
Moreover, we have

\begin{equation}
\sum_{\hour \in \HOUR} a_\hour x^{\epsilon,\#}_\hour -b_\hour y^{\epsilon,\#}_\hour =  \sum_{\hour \in \HOUR} a_\hour x^{0,*}_\hour -b_\hour y^{0,*}_\hour- \frac{a_{\hour'}b_{\hour'}}{a_{\hour'} + b_{\hour'}}\epsilon + \frac{b_{\hour'} a_{\hour'}}{a_{\hour'} + b_{\hour'}}\epsilon  =  \sum_{\hour \in \HOUR} a_\hour x^{0}_\hour -b_\hour y^{0}_\hour \eqfinp \nonumber
\end{equation}
and since $(x^{\epsilon,\#}_\hour,y^{\epsilon,\#}_\hour,u^{\epsilon,\#}_\hour)_{\hour \in \HOUR}$ is feasible for Problem~\eqref{eq:schiever_lemma_multiplier}, we have 
\begin{align}\label{eq:lambda_upper_bound}
\psi_{\hour'}(\epsilon) &\leq  \sum\limits_{\hour \in \HOUR}
    \Big(\alpha_\hour x^{\epsilon,\#}_\hour +\beta_\hour y^{\epsilon,\#}_\hour\Big)+ f\big((u^{\epsilon,\#}_\hour)_{\hour \in \HOUR}\big) +\FinalCost \big(\sum_{\hour \in \HOUR} a_\hour x^{\epsilon,\#}_\hour -b_\hour y^{\epsilon,\#}_\hour \big) \nonumber \\ &= \sum\limits_{\hour \in \HOUR} \Big(\alpha_\hour x^{0}_\hour +\beta_\hour y^{0}_\hour\Big)+ f\big((u^{0}_\hour)_{\hour \in \HOUR}\big) +  \FinalCost \big(\sum_{\hour \in \HOUR} a_\hour x^{0}_\hour -b_\hour y^{0}_\hour\big) -\frac{\alpha_{\hour'} b_{\hour'}}{a_{\hour'} + b_{\hour'}}\epsilon - \frac{\beta_{\hour'}a_{\hour'}}{a_{\hour'} + b_{\hour'}}\epsilon \nonumber
    \\ 
    &= \psi_{\hour'}(0)- \big(\frac{ \alpha_{\hour'}b_{\hour'}}{a_{\hour'} + b_{\hour'}} + \frac{\beta_{\hour'}a_{\hour'}}{a_{\hour'} + b_{\hour'}} \big) \epsilon\eqfinp  
\end{align}
Moreover, as $\psi_{\hour'}(\epsilon)$ is a convex function \cite{rockaffelar}, we have
\begin{equation}
  \psi_{\hour'}(z) +\partial \psi_{\hour'}(z)(\epsilon-z) \leq \psi_{\hour'}(\epsilon) \eqsepv \forall z \geq 0 \eqfinp \nonumber
\end{equation}
In particular, for $z=0$, we have $\psi_{\hour'}(0) +\partial \psi_{\hour'}(0)\epsilon \leq \psi_{\hour'}(\epsilon)$.

Since $-\lambda^{0}_{\hour'}$ is a subgradient of $\psi_{\hour'}$ for $\epsilon=0$, it follows that
\begin{equation}\label{eq:lambda_lower_bound}
  \psi_{\hour'}(0) -\lambda^{0}_{\hour'}\epsilon \leq \psi_{\hour'}(\epsilon) \eqfinp
\end{equation}
Combining Equation~\eqref{eq:lambda_upper_bound} and Equation~\eqref{eq:lambda_lower_bound}, we get that $\lambda^{0}_{\hour'} \geq \frac{ \alpha_{\hour'} b_{\hour'}}{a_{\hour'} + b_{\hour'}} + \frac{\beta_{\hour'}a_{\hour'}}{a_{\hour'} + b_{\hour'}}$.

This ends the proof.
\end{proof}

\iffalse
\begin{corollary}\label{corollary:schiever_multiplier}
Consider the optimization Problem defined in~\eqref{eq:schiever_lemma_multiplier}.\\ We define $\HOUR_0$ by $\HOUR_0 = \{h\in \HOUR \: |\: x^{0}_\hour >0 \text{ and } y^{0}_\hour>0\}$. Then, for all $\hour \in \HOUR_0$, we have that $\lambda^{0}_{\hour} \geq \frac{ \alpha_{\hour}b_{\hour}}{a_{\hour} + b_{\hour}} + \frac{\beta_{\hour}a_{\hour}}{a_{\hour} + b_{\hour}}$.
\end{corollary}
\begin{proof}
For all $\hour \in \HOUR_0$, we apply Lemma~\ref{lemma:schiever_lemma_multiplier} and obtain that  $\lambda^{0}_{\hour} \geq \frac{ \alpha_{\hour}b_{\hour}}{a_{\hour} + b_{\hour}} + \frac{\beta_{\hour}a_{\hour}}{a_{\hour} + b_{\hour}}$. This ends the proof.
\end{proof}
\rl{injecter directment le corolaire dans la preuve dans la step correspondants}
\fi


\begin{thebibliography}{10}

\bibitem{Bellman}
R.~Bellman.
\newblock On the theory of dynamic programming.
\newblock {\em Proceedings of the National Academy of Sciences},
  38(8):716--719, 1952.

\bibitem{SDDP.jl}
O.~Dowson and L.~Kapelevich.
\newblock {SDDP}.jl: a {Julia} package for stochastic dual dynamic programming.
\newblock {\em INFORMS Journal on Computing}, 33:27--33, 2021.

\bibitem{TWOSTAGES3}
M.~Fochesato, P.~Heer, and J.~Lygeros.
\newblock Multi-objective optimization of a power-to-hydrogen system for
  mobility via two-stage stochastic programming.
\newblock {\em Journal of Physics: Conference Series}, 2042:012034, 11 2021.

\bibitem{gurobi}
{Gurobi Optimization, LLC}.
\newblock {Gurobi Optimizer Reference Manual}, 2023.

\bibitem{TWOSTAGESARC}
U.~Hasturk, A.~H. Schrotenboer, K.~J. Roodbergen, and E.~Ursavas.
\newblock Multi-period stochastic network design for combined natural gas and
  hydrogen distribution, 2023.

\bibitem{FastReduc}
H.~Heitsch and W.~Römisch.
\newblock Scenario reduction algorithms in stochastic programming.
\newblock {\em Computational Optimization and Applications}, 24(2-3):187 –
  206, 2003.
\newblock Cited by: 739.

\bibitem{JuMP}
M.~Lubin, O.~Dowson, J.~{Dias Garcia}, J.~Huchette, B.~Legat, and J.~P. Vielma.
\newblock {JuMP} 1.0: {R}ecent improvements to a modeling language for
  mathematical optimization.
\newblock {\em Mathematical Programming Computation}, 2023.

\bibitem{TWOSTAGES1}
U.~Mukherjee, A.~Maroufmashat, A.~Narayan, A.~Elkamel, and M.~Fowler.
\newblock A stochastic programming approach for the planning and operation of a
  power to gas energy hub with multiple energy recovery pathways.
\newblock {\em Energies}, 10, 06 2017.

\bibitem{ELECCOST}
H.~Nami, O.~Rizvandi, C.~Chatzichristodoulou, P.~Hendriksen, and H.~Frandsen.
\newblock Techno-economic analysis of current and emerging electrolysis
  technologies for green hydrogen production.
\newblock {\em Energy Conversion and Management}, 269, 2022.

\bibitem{TWOSTAGES2}
P.~Nunes, F.~Oliveira, S.~Hamacher, and A.~Almansoori.
\newblock Design of a hydrogen supply chain with uncertainty.
\newblock {\em International Journal of Hydrogen Energy}, 40(46):16408--16418,
  2015.

\bibitem{SDDP}
M.~V. Pereira and L.~M. Pinto.
\newblock Multi-stage stochastic optimization applied to energy planning.
\newblock {\em Math. Program.}, 52(1-3):359–375, May 2021.

\bibitem{TWOSTAGES4}
R.~Qi, Y.~Qiu, J.~Lin, Y.~Song, W.~Li, X.~Xing, and Q.~Hu.
\newblock Two-stage stochastic programming-based capacity optimization for a
  high-temperature electrolysis system considering dynamic operation
  strategies.
\newblock {\em Journal of Energy Storage}, 40:102733, 2021.

\bibitem{TWOSTAGESELEC}
A.~{Rezaee Jordehi}, S.~A. Mansouri, M.~Tostado-Véliz, M.~Carrión,
  M.~Hossain, and F.~Jurado.
\newblock A risk-averse two-stage stochastic model for optimal participation of
  hydrogen fuel stations in electricity markets.
\newblock {\em International Journal of Hydrogen Energy}, 49:188--201, 2024.

\bibitem{rockaffelar}
R.~T. Rockafellar.
\newblock {\em Convex analysis}.
\newblock Princeton university press, 2015.

\bibitem{SIMILAR}
A.~H. Schrotenboer, A.~A. Veenstra, M.~A. {uit het Broek}, and E.~Ursavas.
\newblock A green hydrogen energy system: Optimal control strategies for
  integrated hydrogen storage and power generation with wind energy.
\newblock {\em Renewable and Sustainable Energy Reviews}, 168:112744, 2022.

\bibitem{SDDiP}
X.~Sun, X.~Cao, M.~Li, Q.~Zhai, and X.~Guan.
\newblock Seasonal operation planning of hydrogen-enabled multi-energy
  microgrids through multistage stochastic programming.
\newblock {\em Journal of Energy Storage}, 85:111125, 2024.

\bibitem{SCENARIOREDUC}
X.~Wu, W.~Zhao, H.~Li, B.~Liu, Z.~Zhang, and X.~Wang.
\newblock Multi-stage stochastic programming based offering strategy for
  hydrogen fueling station in joint energy, reserve markets.
\newblock {\em Renewable Energy}, 180:605--615, 2021.

\bibitem{SDDiPAlgorithm}
J.~Zou, S.~Ahmed, and X.~A. Sun.
\newblock Stochastic dual dynamic integer programming.
\newblock {\em Math. Program.}, 175(1-2):461--502, 2019.

\end{thebibliography}
\end{document}